\newtheorem{tw}{Theorem}  
\newtheorem{lem}[tw]{Lemma}  
\newtheorem{obs}[tw]{Observation}
\newcommand{\pusty}{\emptyset}
\begin{document}
\hyphenation{every}
\title{On uniqueness of packing of  three copies of 2-factors}
\author{Igor Grzelec\thanks{Department of Discrete Mathematics, AGH University of Krakow, Poland. The corresponding author. Email:  grzelec@agh.edu.pl}, Tomáš Madaras\thanks{Institute of mathematics, P.J. Šafárik University, Košice, Slovakia}, Alfréd Onderko\footnotemark[2]}

\maketitle
\begin{abstract}
The packing of three copies of a graph $G$ is the union of three edge-disjoint copies (with the same vertex set) of $G$.
In this paper, we completely solve the problem of the uniqueness of packing of three copies of 2-regular graphs. In particular, we show that $C_3,C_4,C_5,C_6$ and $2C_3$ have no packing of three copies, $C_7,C_8,C_3 \cup C_4, C_4 \cup C_4, C_3 \cup C_5$ and $3C_3$ have unique packing, and any other collection of cycles has at least two distinct packings.
\end{abstract}

\section{Introduction}
 All graphs considered in this paper are finite, undirected and have neither loops nor multiple edges. For a graph $G$, we will denote its order $|V(G)|$ and size $|E(G)|$ as $n$ and $m$, respectively.

At the beginning, we present additional definitions which will be useful to formulate the results. For two graphs $G_1$ and $G_2$ with disjoint vertex sets, the {\it union} $G=G_1\cup G_2$  has $V(G)=V(G_1)\cup V(G_2)$ and  $E(G)=E(G_1)\cup E(G_2)$. The union of $n \geq 2$  disjoint copies of a graph $H$ is denoted by $G=nH$. Further, for graphs $G_1$ and $G_2$ such that $V(G_1)=V(G_2)$ and $E(G_1)\cap E(G_2)=\pusty$, the {\it edge sum} $G_1\oplus G_2$ has $V(G)=V(G_1)=V(G_2)$ and $E(G)=E(G_1)\cup E(G_2)$.

A permutation $\sigma$ on $V(G)$ with the property that whenever $xy \in E(G)$, then $\sigma (x)\sigma (y) \not \in E(G)$, is called an {\it embedding} of $G$ in its complement $\overline{G}$. In other words, an embedding is an edge-disjoint {\it packing} of two copies of $G$ into a complete graph $K_n$.

One of the first results on packing problem is the following theorem, which was proved independently in \cite{BoEl}, \cite{BuSc} and \cite{SaSp}:

\begin{tw} \label{E2}
Let $G=(V,E)$ be a graph of order $n$ and size $m$. If $m\leq n-2$ then $G$ can be embedded in its complement $\overline{G}$.  
\end{tw}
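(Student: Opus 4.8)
The plan is to argue by induction on the order $n$, exploiting the sparsity forced by $m \le n-2$. First I would dispose of the small cases $n \le 3$ directly: when $m \le n-2$ there are so few edges that an explicit permutation is easily checked to be an embedding (for $n=3$ and $m=1$, transposing one endpoint of the single edge with the isolated vertex works, and the cases with $m=0$ are trivial). These furnish the base of the induction.

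For the inductive step, the starting observation is a degree count: since $\sum_{v}\deg(v) = 2m \le 2n-4$, not every vertex can have degree $\ge 2$; in fact at least two vertices have degree at most $1$, since otherwise $2m \ge 2(n-1) > 2n-4$. (Moreover, as a connected graph on $n$ vertices has at least $n-1$ edges, the hypothesis $m \le n-2$ forces $G$ to be disconnected, which I may use to simplify the case analysis.) The idea is then to delete a single well-chosen vertex $u$ so that $G' = G - u$, a graph on $n-1$ vertices, still satisfies the hypothesis $|E(G')| \le (n-1)-2$, apply the induction hypothesis to get an embedding $\sigma'$ of $G'$ in $\overline{G'}$, and extend it to $G$. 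The delicate point here is keeping the edge bound after deletion: if $m < n-2$ any vertex may be removed, while in the tight case $m = n-2$ I would remove a vertex $u$ of degree at least $1$ (one exists since then $m \ge 1$), so that $|E(G')| = m - \deg(u) \le n-3 = (n-1)-2$ and induction applies.

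The main obstacle, and the technical heart of the argument, is the extension step: given $\sigma'$, reinsert $u$ and reassign values so that the resulting permutation $\sigma$ is still an embedding of the full $G$. I would do this by splicing $u$ into the cycle structure of $\sigma'$ at a suitable vertex $w$, setting $\sigma(w)=u$, $\sigma(u)=\sigma'(w)$, and $\sigma=\sigma'$ elsewhere, and then verifying the few new non-adjacency constraints this creates, namely those arising from the $\deg(u)$ edges incident with $u$ and from the edges of $G'$ incident with $w$. Each such constraint forbids only a controlled set of candidate positions $w$, so a counting argument driven by $m \le n-2$ should show that an admissible $w$ always exists; restricting $w$ to the non-neighbours of $u$ and giving preference to low-degree vertices keeps the number of forbidden choices strictly below $n$. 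Carrying out this counting cleanly, and separating off the residual small configurations where the slack vanishes, is where I expect the real work to lie.
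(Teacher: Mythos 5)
First, a point of comparison: the paper contains no proof of Theorem \ref{E2} at all --- it is quoted as a known result, proved independently in \cite{BoEl}, \cite{BuSc} and \cite{SaSp} --- so your attempt can only be measured against the classical arguments, not against anything in this paper. Much of your skeleton is sound: the base cases are fine, the bookkeeping for the deletion step is correct (if $m=n-2$, removing a vertex of positive degree gives $|E(G-u)|\leq n-3$), and your enumeration of the constraints created by the splice $\sigma(w)=u$, $\sigma(u)=\sigma'(w)$ is complete, since for $x,y\notin\{u,w\}$ the pair $\sigma'(x)\sigma'(y)$ avoids $u$ and hence cannot hit a new edge.

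The genuine gap is the extension lemma itself, which you explicitly defer (``where I expect the real work to lie'') and which is precisely the heart of the theorem. Your counting heuristic does not go through as stated. With $w$ restricted to non-neighbours of $u$, the forbidden choices are: $w\in N_G(u)$ (at most $\deg u$ vertices), $w\in N_{G'}\bigl(\sigma'^{-1}(N_G(u))\bigr)$, and $\sigma'(w)\in N_{G'}\bigl(\sigma'(N_G(u))\bigr)$, so the only a priori bound is
$\deg_G(u)+\sum_{v\in N_G(u)}\bigl(\deg_{G'}\sigma'(v)+\deg_{G'}\sigma'^{-1}(v)\bigr)$.
Since $G'$ has $n-3$ edges, a single vertex of $G'$ can have degree as large as $n-3$; already for $\deg_G(u)=1$, if both $\sigma'(v_0)$ and $\sigma'^{-1}(v_0)$ have large degree (think of $v_0$ mapped onto or from the centre of a near-star or double star), the bound reaches roughly $2(n-3)+1>n-1$ and the count gives nothing. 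The induction hypothesis hands you an \emph{arbitrary} embedding $\sigma'$, with no control over where high-degree vertices are sent, so to rescue the argument you must either prove structural facts about every embedding of a sparse graph (e.g.\ that high-degree vertices are forced onto low-degree images), strengthen the inductive statement, or replace the one-vertex splice by removing several vertices with a case analysis in the blocked configurations --- and that is essentially the actual content of the Burns--Schuster and Sauer--Spencer proofs. As submitted, the proposal is a plausible plan with its decisive step missing, not a proof.
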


We can easily see that the star $K_{1,n-1}$ is not embeddable. Therefore,  Theorem \ref{E2} cannot be improved by raising the size of $G$. Burns and Schuster in \cite{BuSc1} described the full characterization of graphs of order $n$ and size $n-1$ that are embeddable:   

\begin{tw} \label{E3}
Let $G=(V,E)$ be a graph of order $n$ and size $m$. If $m\leq n-1$ then either $G$ is embeddable or $G$ is isomorphic to one of the following graphs: $K_{1, n-1}$,  $K_{1, n-4} \cup K_3$ with $n\geq8$, $K_1 \cup K_3$, $K_2 \cup K_3$, $K_1 \cup 2K_3$, $K_1 \cup C_4$.
\end{tw}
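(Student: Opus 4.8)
The plan is to start from Theorem~\ref{E2}: if $m \le n-2$ then $G$ already embeds, so the entire content of the statement lies in the case $m = n-1$, which I will assume from now on. A first useful remark is that every graph in the exceptional list has size exactly $n-1$, so none of them is accidentally lost in this reduction. For a graph with $n$ vertices, $n-1$ edges and $c$ connected components, the circuit rank equals $m-n+c = c-1$; hence a connected such graph is a tree, a graph with two components carries exactly one cycle, and in general the number of independent cycles is one less than the number of components. This splits the argument into the \emph{tree case} and the \emph{case where $G$ contains at least one cycle}, and it is the interaction of isolated or low-degree vertices with small cycles ($C_3$, $C_4$) that produces the exceptions.

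For the tree case I would prove the sharper statement that every tree on $n$ vertices other than the star $K_{1,n-1}$ embeds in its complement. The star itself fails because its complement is $K_{n-1}\cup K_1$, whose maximum degree $n-2$ is too small to host the required degree-$(n-1)$ vertex. For a non-star tree $T$ (equivalently, one of diameter at least $3$) I would argue by induction on $n$: take a longest path, delete a leaf $v$ at one end to obtain a tree $T'=T-v$ of size $(n-1)-1$, invoke the inductive embedding $\sigma'$ of $T'$, and re-insert $v$ by extending $\sigma'$ with $\sigma(v)=v$. Since $v$'s only neighbour is its attachment vertex $v_1$, the only new edge $vv_1$ is sent to a non-edge exactly when $\sigma'(v_1)\ne v_1$, and every old edge stays a non-edge because $\sigma'$ never uses $v$ as an image. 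The delicate point is thus to choose the inductive embedding so that it does not fix $v_1$ (a local transposition repairs it otherwise), and to handle separately the case where $T'$ is itself a star: this occurs precisely when $T$ is a double star (broom), and such trees I would dispatch by an explicit rotational embedding, checking only that both centres carry a leaf.

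For the case with cycles I would peel off components and exploit the fact that a triangle of $G$ forces a triangle in $\overline{G}$ and a $C_4$ forces a $C_4$. When the remaining vertices are too few or too constrained, the required cycles cannot be placed vertex-disjointly in the complement. Concretely, one checks that $K_1\cup K_3$ has complement $K_{1,3}$ and $K_2\cup K_3$ has complement $K_{2,3}$, both triangle-free; that in $K_1\cup 2K_3$ the complement restricted to the triangle vertices is the bipartite $K_{3,3}$, so every complement triangle must use the unique isolated vertex and two disjoint triangles cannot coexist; and that the complement of $K_1\cup C_4$ is the bowtie, which has no $4$-cycle. These four small graphs, together with the family $K_{1,n-4}\cup K_3$, are exactly the configurations in which the room left by the acyclic part is insufficient; every other graph of size $n-1$ I would embed by first packing the cyclic components into the complement and then extending over the forest part as in the tree case.

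I expect the main obstacle to be the bookkeeping for $K_{1,n-4}\cup K_3$ and the threshold $n\ge 8$. Here the star centre has $G$-degree $n-4$, hence complement-degree $3$, while the only vertices of large complement-degree are the leaves (degree $n-2$) and the triangle vertices (degree $n-3$); showing that for $n\ge 8$ no assignment of roles succeeds, yet for $n=7$ the graph $K_{1,3}\cup K_3$ (absent from the list) does embed, rests on a careful degree count in the complement rather than on any single clean idea. Verifying that the list is \emph{complete} -- that every graph of size $n-1$ outside these finitely many families and the star admits an embedding -- is the part that demands the most case analysis.
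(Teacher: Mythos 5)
First, a point of calibration: the paper contains no proof of Theorem~\ref{E3} at all --- it is quoted as a known result of Burns and Schuster \cite{BuSc1} --- so your attempt can only be judged on its own merits, not against an argument in this text. On those merits, the easy half of your proposal is sound: the reduction to $m=n-1$ via Theorem~\ref{E2} is legitimate (and your check that all listed exceptions have size exactly $n-1$ is correct), the circuit-rank bookkeeping is right, and your non-embeddability verifications are all accurate ($\overline{K_1\cup K_3}=K_{1,3}$ and $\overline{K_2\cup K_3}=K_{2,3}$ are triangle-free; every triangle of $\overline{K_1\cup 2K_3}$ uses the isolated vertex since the rest is $K_{3,3}$; the bowtie $\overline{K_1\cup C_4}$ has no $C_4$). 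Even the case you flag as delicate, $K_{1,n-4}\cup K_3$ with $n\geq 8$, closes cleanly along the lines you indicate: the star center $c$ has complement-degree $3<n-4$, so $\sigma(c)\neq c$; since $c$ lies in no triangle of the complement, $c$ must be an image star-leaf, forcing $\sigma(c)$ into the old triangle, whose complement-degree $n-3$ leaves only one vertex outside the image star --- the image triangle would then need two old-triangle vertices, which are nonadjacent in the complement. For $n=7$ the map fixing $c$ and swapping leaves with triangle vertices embeds $K_{1,3}\cup K_3$, matching the threshold.

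The genuine gaps are in the hard half, which your proposal announces rather than proves. Concretely: (i) in the tree induction, the claim that a fixed point $\sigma'(v_1)=v_1$ is repaired by ``a local transposition'' is unjustified --- composing an embedding with a transposition can send other edges onto edges of $T$, and the standard fix is a \emph{stronger} induction hypothesis (e.g.\ that every non-star tree admits a fixed-point-free embedding), which you neither state nor establish; the double-star base case is likewise deferred to an unverified ``rotational embedding.'' (ii) More seriously, the completeness claim --- that every graph of size $n-1$ outside the list embeds ``by first packing the cyclic components into the complement and then extending over the forest part'' --- is precisely the bulk of the Burns--Schuster theorem and is not carried out. As stated it is not even sound: an embedding is a single permutation of $V(G)$, so components mix, and image edges of one component can land inside the edge set of another (this interaction between small cycles and small acyclic parts is exactly what generates $K_1\cup K_3$, $K_1\cup 2K_3$, $K_1\cup C_4$ and $K_{1,n-4}\cup K_3$ as exceptions, as you yourself observe). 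You would need an explicit inductive scheme --- typically peeling off a component or an edge and a careful case analysis near triangles and $4$-cycles --- to show the list is exhaustive; without it, what you have is a correct verification that the six listed graphs fail, plus a plausible but unproven program for the converse.
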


Considering the problem of the uniqueness of graph packings, let us explain first what we mean by distinct embeddings: let $\sigma$ be an embedding of the graph $G=(V,E)$. Denote by  $\sigma (G)$ the graph with the vertex set $V$ and the edge set   $\sigma ^* (E)$ where the mapping $\sigma ^*$ is induced by  $\sigma$. From the definition of an embedding we can see that the sets $E$ and $\sigma ^* (E)$ are disjoint; thus, we may create the graph $G\oplus\sigma (G)$. 
We say that two embeddings $\sigma_1$, $\sigma_2$ of a graph $G$ are {\it distinct} if the graphs $G\oplus\sigma_1 (G)$ and $G\oplus\sigma_2 (G)$ are not isomorphic. We call a graph $G$ {\it uniquely embeddable} if, for all embeddings $\sigma$ of $G$, all graphs $G\oplus\sigma (G)$ are isomorphic.

This problem has been the subject of three papers. The next theorem from \cite{Woz1} characterizes all graphs of order $n$ and size $n-2$ that are uniquely embeddable:

\begin{tw} \label{e1}
Let $G$ be a graph of order $n$ and size $m=n-2$. Then either $G$ is not uniquely embeddable or $G$ is isomorphic to one of the six following graphs: $K_2\cup K_1$, $2K_2$, $K_3\cup 2K_1$, $K_3\cup K_2\cup K_1$, $K_3\cup 2K_2$, $2K_3\cup 2K_1$.  
\end{tw}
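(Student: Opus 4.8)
The theorem as stated asserts one implication: if a graph $G$ of order $n$ and size $m=n-2$ is uniquely embeddable, then it is one of the six listed graphs. Equivalently, my task is to show that every graph of this size which is \emph{not} isomorphic to one of the six admits two distinct embeddings. Since $m=n-2$, Theorem~\ref{E2} guarantees that such a $G$ is embeddable at all, so there is indeed an embedding to start from; I also assume $m\ge1$ (so $n\ge3$), since the edgeless graph $2K_1$ is degenerately unique and must be excluded for the statement to hold. To round the result into the full characterization claimed in the surrounding text, I would separately check that the six listed graphs really are uniquely embeddable; as each has order at most $8$, this is a finite enumeration of embeddings up to the isomorphism type of $G\oplus\sigma(G)$.

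For the main implication I would first record the structural consequence of $m=n-2$: a graph with $c$ components has circuit rank $m-n+c=c-2\ge0$, so $c\ge2$ and each independent cycle sits in its own surplus component. Accordingly I split the non-exceptional graphs into three regimes and, in each, produce two embeddings separated by an isomorphism invariant of the edge sum. \emph{Regime~A} ($G$ has a vertex $v$ of degree at least $3$): since the average degree is below $2$, lower-degree vertices abound, and the many non-neighbours of $v$ give enough room to realise both an embedding fixing $v$ and one moving $v$; as $\deg_{G\oplus\sigma(G)}(x)=\deg_G(x)+\deg_G(\sigma^{-1}(x))$, fixing the (essentially unique) vertex of maximum degree $\Delta$ yields edge-sum degree $2\Delta$ while moving it does not, so the two edge sums have different degree sequences. \emph{Regime~B} ($\Delta(G)\le2$ but some component is a path $P_k$ with $k\ge3$ or a cycle $C_k$ with $k\ge4$): here the surplus vertices forced by $m=n-2$ let me re-route the long component in two genuinely different ways, producing edge sums whose cycle-length multisets, and hence girth or number of components, differ.

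\emph{Regime~C}, in which every component of $G$ is $K_1$, $K_2$, or $K_3$, is the delicate core and the step I expect to be the main obstacle. Writing $G=aK_1\cup bK_2\cup dK_3$, the equation $m=n-2$ reduces to $a+b=2$, so the only parameter left free is the number $d$ of triangles, and the six exceptional graphs are precisely the members of this family with few triangles ($d\le2$ when $a=2$, and $d\le1$ otherwise). The trouble is that triangles are rigid: for small $d$ uniqueness genuinely holds, so the construction must fail there and switch on exactly at the stated threshold. I would analyse how the triangles of $\sigma(G)$ can meet those of $G$ and show that, once $d$ exceeds the threshold (with the two spare vertices or edges supplying the needed room), two embeddings can be arranged whose edge sums are distinguished by a local invariant, for instance the presence of a $K_4$, or the number of pairs of triangles sharing a vertex. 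Matching the switch-on point to the precise list in the statement is the crux, and I expect it to demand the most careful bookkeeping of the whole argument.
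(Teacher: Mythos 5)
First, a point of provenance: Theorem~\ref{e1} is quoted in this paper as background from \cite{Woz1} and is not proved here, so there is no internal proof to compare your attempt against; I can only judge it on its own merits. On those merits it has a genuine gap, and in fact one of your three pillars rests on a false claim. In Regime~A you assert that for a vertex $v$ of maximum degree $\Delta\geq 3$ one can ``realise both an embedding fixing $v$ and one moving $v$'' because non-neighbours of $v$ abound. Take $G=K_{1,n-2}\cup K_1$, which has size $n-2$ and is not among the six exceptions: the centre $v$ has exactly one non-neighbour (the isolated vertex), and any embedding $\sigma$ with $\sigma(v)=v$ would have to map all $n-2$ leaves into that single vertex, so \emph{no} embedding fixes $v$; the same failure occurs for $K_{1,n-3}\cup K_2$. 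These graphs still satisfy the theorem (one can exhibit two embeddings by hand, e.g.\ for $K_{1,n-2}\cup K_1$ one edge sum is the triangle-free $K_{2,n-2}$ and another contains a triangle), but your mechanism cannot produce them. Moreover the parenthetical ``essentially unique vertex of maximum degree'' is wrong: $2K_{1,3}$ has $n=8$, $m=6=n-2$ and two vertices of degree $3$, so even the degree-sequence separation ($2\Delta$ attained or not) needs an argument about which vertices $\sigma$ superimposes, not about fixing a single vertex.

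Beyond that, Regimes~B and~C are programs rather than proofs. Regime~B (``re-route the long component in two genuinely different ways'') contains no construction and no verification that the resulting permutations are embeddings. Regime~C is the heart of the theorem --- your reduction of $G=aK_1\cup bK_2\cup dK_3$ with $m=n-2$ to $a+b=2$ is correct, and it is a nice observation that all six exceptional graphs, and only they (modulo the degenerate $2K_1$, which you rightly flag), live in this family with $d$ below a threshold --- but you explicitly leave the threshold analysis open and call it ``the crux.'' Without two distinct embeddings for each of $3K_3\cup 2K_1$, $2K_3\cup K_2\cup K_1$, $2K_3\cup 2K_2$ and their extensions to larger $d$, and without the finite checks that the six listed graphs are uniquely embeddable (admittedly not needed for the literal implication as stated), the statement is not established. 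Your overall skeleton --- the correct reading of the implication, the use of Theorem~\ref{E2} for existence, the component count $c\geq 2$ from circuit rank, and the case split by maximum degree --- is sensible, but as submitted the argument proves the theorem in none of its three regimes; for a complete treatment you would need to repair Regime~A and carry out the triangle bookkeeping, or consult the original proof in \cite{Woz1}.
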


The following characterization of uniquely embeddable forests was proved in \cite{Otwinowska}. 

\begin{tw} 
Let $F$ be a forest of order $n$ having at least one edge. Then either $F$ is not uniquely embeddable or $F$ is isomorphic to one of the following graphs: $K_2 \cup K_1$, $2K_2$, $3K_2$, the double star $S(p,q)$ or the $(n-1)$-vertex star with one edge subdivided. 
\end{tw}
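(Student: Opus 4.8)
The plan is to organise the whole argument around the diameter of the tree components of $F$, treating embeddability, the ``non-unique'' direction, and the ``unique'' direction in turn. A forest with $k$ components has $m=n-k$ edges, so an edge-containing forest satisfies $m\le n-1$, with equality exactly for trees. By Theorem~\ref{E3} the only forest appearing among the non-embeddable graphs is the star $K_{1,n-1}$; thus every forest other than a spanning star is embeddable, and, using the convention that a uniquely embeddable graph is in particular embeddable, the stars are excluded immediately. It remains to decide which embeddable forests are uniquely embeddable.

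The structural key is the elementary fact that a non-star tree is a double star if and only if it has diameter $3$, equivalently if and only if its vertices of degree at least $2$ are exactly two adjacent vertices; indeed, if $a\,b\,c\,d$ is a diametral path of length $3$ then maximality of the diameter forces every other vertex to be adjacent to $b$ or to $c$, which is precisely $S(p,q)$ with centres $b,c$ (the subdivided star being the case $q=1$). I would then prove non-uniqueness for all remaining forests in two families. For a tree $T$ of diameter at least $4$ there are two internal vertices at distance at least $2$, and this separation provides two independent ways to route an embedding of a longest path $v_0v_1\cdots v_d$; I would exhibit embeddings $\sigma_1,\sigma_2$ whose sums $T\oplus\sigma_i(T)$ differ in an isomorphism invariant, most cheaply the degree sequence (already for $P_5$ the two unused edges of the $10$-edge $K_5$ can be made either adjacent or independent, giving degree sequences $4,4,3,3,2$ and $4,3,3,3,3$), or, for sparser trees, the length of the longest cycle. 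For a \emph{disconnected} forest other than $K_2\cup K_1$, $2K_2$ and $3K_2$, I would exploit the complete bipartite structure of $\overline F$ between distinct components to route $\sigma$ so that components of $G=F\oplus\sigma(F)$ either merge into one piece or split; already $4K_2$ realises both $C_8$ and $C_4\cup C_4$, and the same mechanism handles every larger or less homogeneous disjoint union.

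The substantial part is the converse: that each double star and each of $K_2\cup K_1,2K_2,3K_2$ is uniquely embeddable. The three small forests fall to a direct check; for example every embedding of $3K_2$ yields a $2$-regular graph on six vertices, and since $2C_3$ cannot be decomposed into two perfect matchings (each triangle has odd order) while $C_6$ can, the sum is forced to be $C_6$. The crux is the double star $S(p,q)$. Writing $u,v$ for its centres and $u'=\sigma(u)$, $v'=\sigma(v)$ for the centres of $\sigma(S(p,q))$, I would observe that every vertex outside the \emph{core} $\{u,v,u',v'\}$ is a leaf in both copies and hence has degree exactly $2$ in $G$; consequently $G$ is the union of the four core vertices with a family of internally disjoint degree-$2$ chains joining them, and its isomorphism type is fixed by the adjacencies inside the core together with the multiset of chain lengths. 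The heart of the matter is to show that these data are independent of $\sigma$: the edge $uv$ and its image constrain how $u,v,u',v'$ may coincide or be adjacent, the degrees $p+1$ and $q+1$ pin down which core vertices carry the high degree, and a count of leaves fixes the chain lengths. I expect this step---ruling out that different placements of the image centres relative to the original tree produce genuinely different core-and-chains graphs---to be the main obstacle, absorbing essentially all of the case analysis.

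Finally I would assemble the pieces: Theorem~\ref{E3} disposes of the stars, the diameter dichotomy together with the two non-uniqueness constructions removes every tree of diameter at least $4$ and every disconnected forest outside the short list, and the core-and-chains analysis (with the direct check for the three small forests) establishes uniqueness for the double stars. A little care is needed only at the overlaps, where $P_4=S(1,1)$ and the subdivided star $S(n-3,1)$ are themselves double stars and so need no separate treatment, and at the boundary value $q=1$, which is handled by specialising the same core-and-chains argument.
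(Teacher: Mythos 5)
First, a caveat about the comparison you asked for: the paper does not prove this statement at all --- it is quoted as a known result from \cite{Otwinowska} (Otfinowska and Wo\'{z}niak), so there is no in-paper proof to measure your attempt against, and I can only judge the proposal on its own merits. Where your plan is explicit, it checks out: by Theorem~\ref{E3} the spanning star is indeed the only non-embeddable forest; a non-star tree has diameter $3$ exactly when it is a double star, with the subdivided star being $S(n-3,1)$; the small verifications are correct ($3K_2$ is forced to $C_6$ because $2C_3$ has no $1$-factorization, $P_5$ really does realize both complement configurations, and $4K_2$ realizes both $C_8$ and $2C_4$). Your ``core-and-chains'' reduction for double stars is also sound, and in fact sharper than you state: since \emph{every} edge of a double star is incident to a center, each non-core vertex has both of its two edges in $F\oplus\sigma(F)$ going into the core $\{u,v,\sigma(u),\sigma(v)\}$, so all chains have length exactly two and there is no ``multiset of chain lengths'' to control --- the sum is a once-subdivided multigraph on at most four vertices, which substantially shrinks the case analysis you defer.

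The genuine gaps are two. First, both substantive directions are announced rather than proved: for trees of diameter at least $4$ you only ``would exhibit'' embeddings distinguished by degree sequence or circumference (no construction beyond $P_5$ is given, and this family is the bulk of the theorem), and the double-star uniqueness analysis is exactly the step you yourself flag as ``absorbing essentially all of the case analysis.'' Second, and more concretely, the mechanism you propose for disconnected forests --- routing $\sigma$ so that components of the sum merge or split --- provably fails on a family inside your case. Take $F=K_{1,n-2}\cup K_1$ for $n\geq 4$ (a star plus an isolated vertex), which is not in the exceptional list and so must be shown non-uniquely embeddable. For \emph{every} embedding $\sigma$ the sum is connected: if the isolated vertex $z$ received no edge of $\sigma(F)$, then $\sigma(z)=z$ and $\sigma$ would permute the star's vertex set; the image of the center cannot be the center (all its image edges would hit $F$), so it is some leaf $y_j$, but then some leaf must map onto the old center $x$, recreating the edge $xy_j\in E(F)$ --- a contradiction. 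Hence connectivity distinguishes nothing here, and non-uniqueness must come from a different invariant (it does: for $n=5$ one embedding yields $K_{2,3}$ with degree sequence $(3,3,2,2,2)$, another yields $(4,3,2,2,1)$). So the claim that ``the same mechanism handles every larger or less homogeneous disjoint union'' is false as stated. Note also a shortcut your plan misses: a forest with exactly two components has size $n-2$, so Theorem~\ref{e1} --- which is quoted in this very paper --- immediately disposes of all two-component forests other than $K_2\cup K_1$ and $2K_2$, including the problematic star-plus-isolated-vertex case; your merge/split construction would then only need to be carried out, and actually proved, for forests with at least three components and for trees.
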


Recently the problem of the uniqueness of embeddings was completely solved for 2-factors, i.e. a vertex-disjoint union of cycles. More precisely, the following theorem was proved in \cite{Grzelec Wozniak}:

\begin{tw}\label{Sums}
Let $G$ be a vertex-disjoint union of $k$ cycles. If $G$ is $C_3$, $C_4$ or  $2C_3$ then $G$ is not embeddable. The graphs $C_5$, $C_6$, $C_3 \cup C_4$, $C_3 \cup C_5$, $3C_3$ and $4C_3$ are uniquely embeddable. In every other case, there exist at least two distinct embeddings of $G$.
\end{tw}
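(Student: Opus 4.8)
The plan is to translate embeddability of a 2-factor into a statement about $4$-regular graphs. If $G$ is a vertex-disjoint union of cycles on $n$ vertices, then $G$ is $2$-regular with exactly $n$ edges, and for any embedding $\sigma$ the edge sum $H_\sigma = G\oplus\sigma(G)$ is a $4$-regular graph on $n$ vertices; by the definition of distinctness, $G$ is uniquely embeddable precisely when all the graphs $H_\sigma$ are isomorphic. The first observation I would record is that passing to the complement often pins $H_\sigma$ down: since $H_\sigma$ is $4$-regular, $\overline{H_\sigma}$ is $(n-5)$-regular. For $n=5$ this forces $\overline{H_\sigma}$ to be edgeless, so $H_\sigma=K_5$ for every embedding (and $\overline{C_5}=C_5$ certainly contains a copy of $C_5$), whence $C_5$ is uniquely embeddable; for $n=6$ it forces $\overline{H_\sigma}$ to be a perfect matching, so $H_\sigma$ is $K_6$ with a perfect matching deleted for every embedding, giving unique embeddability of $C_6$. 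The three non-embeddable cases are disposed of directly: $C_3$ and $C_4$ fail by edge count, because $\overline{C_3}$ and $\overline{C_4}$ have fewer than $n$ edges and so cannot contain a copy of $G$, while $\overline{2C_3}$ is the bipartite graph $K_{3,3}$, which contains no triangle and hence no copy of $2C_3$.

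For the remaining uniquely embeddable graphs ($C_3\cup C_4$, $C_3\cup C_5$, $3C_3$, $4C_3$) the vertex number is bounded ($n\le 12$), so I would carry out a finite analysis of all embeddings. For $n=7$ the complement $\overline{H_\sigma}$ is again a $2$-factor, hence one of $C_7$ or $C_3\cup C_4$; showing $C_3\cup C_4$ uniquely embeddable then amounts to proving that, when two edge-disjoint copies of $C_3\cup C_4$ are packed into $K_7$, the third $2$-factor $\overline{H_\sigma}$ is always of the same type. For $3C_3$ and $4C_3$ it is convenient to embed into the complete multipartite complements $\overline{3C_3}=K_{3,3,3}$ and $\overline{4C_3}=K_{3,3,3,3}$: a copy of $kC_3$ is a system of $k$ disjoint triangles of the host, so each embedding corresponds to a combinatorial configuration of triangles, and I would show that all such configurations yield isomorphic unions by exhibiting, using the large automorphism group of the multipartite host, an isomorphism between any two of them.

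For the main assertion---that every other $2$-factor admits at least two distinct embeddings---I would argue by constructing, for each such $G$, two embeddings whose unions are distinguished by an isomorphism invariant, for which I would use the number of triangles of $H_\sigma$. For a single cycle $C_n$ with $n\ge 7$ I would produce one embedding whose second copy contains a chord of length $2$, creating a triangle with two consecutive edges of $C_n$, and a second embedding realised by a Hamiltonian cycle all of whose chords have length at least $3$ and which is arranged to create no triangle with $C_n$; the two unions then have different triangle counts. A handful of small residues of $n$, where a length-$2$ or length-$3$ chord behaves exceptionally modulo $n$, would be treated by ad hoc variants. For a $2$-factor with several components I would exploit the extra room: fix a base embedding and then apply a local ``twist'' on one component, or across two components, that changes the triangle count while keeping the map an embedding; when $G$ has many triangle components this is easiest, since each triangle can be packed so as to create or to avoid a triangle in $H_\sigma$.

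The main obstacle is this last part, and specifically the need for a \emph{uniform} and \emph{provably distinguishing} construction. Exhibiting two embeddings is routine; the difficulty is certifying that the two unions are non-isomorphic across all infinitely many non-exceptional graphs and all residue classes of the cycle lengths simultaneously, and in dovetailing these constructions with the exact boundary just above the exceptional list, so that $C_5,C_6,C_3\cup C_4,C_3\cup C_5,3C_3,4C_3$ are excluded but their nearest neighbours are not. I expect the triangle count to suffice in most ranges, with a secondary invariant---for instance the number of $4$-cycles, or the cycle structure of $\overline{H_\sigma}$---needed for the few families where triangle counts coincide, together with a short inductive reduction that adds one cycle at a time once a suitable base of small cases has been verified.
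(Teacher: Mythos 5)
First, a point of reference: this paper does not prove Theorem~\ref{Sums} at all --- it is quoted as a known result from \cite{Grzelec Wozniak}, so there is no internal proof to compare against. What the paper does contain is its three-copy analogue (Lemma~\ref{l1}, Lemma~\ref{k=1} and Sections 3--5), whose machinery mirrors the two-copy argument and is the right benchmark for your plan.

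Measured against that, your proposal has a genuine gap exactly where you yourself locate it: the claim that every non-exceptional $2$-factor has two distinct embeddings. Your strategy (two explicit embeddings distinguished by triangle counts, with local twists, residue-class case analysis, and a fallback secondary invariant) is not a proof but a programme, and it attacks the problem at the wrong level of granularity. The idea that actually closes this gap --- and which your sketch is missing --- is the analogue of Lemma~\ref{l1}: if $G$ admits a \emph{disconnected} embedding (i.e.\ $G\oplus\sigma(G)$ is disconnected), then a local two-edge switch between components produces a \emph{connected} embedding, so connectivity itself is the distinguishing invariant and no triangle or $C_4$ counting is ever needed. Combined with splitting $G = G_1 \cup G_2$ into sub-$2$-factors each of which is embeddable, this immediately yields two distinct embeddings for all but a short explicit list of families: whenever the components can be grouped into two embeddable parts, you get a disconnected union for free, then the lemma gives a connected one. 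This removes precisely the ``uniform and provably distinguishing construction'' obstacle you flag, and it also explains why the exceptional list has the boundary it does (the exceptions are exactly the graphs too small to split). For the base case of a single cycle $C_n$, the robust route is again not chord-by-chord triangle bookkeeping but the circulant method used in Lemma~\ref{k=1} of this paper: $G\oplus\sigma(G)$ can be realized as a $4$-regular circulant $C_n(a,b)$ (such graphs decompose into two Hamiltonian cycles), and two such circulants are distinguished by chromatic number or bipartiteness via known theorems, uniformly in $n$, rather than by residue-by-residue ad hoc variants.

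Your treatment of the degenerate cases is fine: $C_3$, $C_4$ fail by edge count, $\overline{2C_3}=K_{3,3}$ is triangle-free, and the complement-regularity argument pins down $C_5$ and $C_6$ uniquely (modulo checking an embedding exists, e.g.\ $\overline{C_6}$ is the Hamiltonian triangular prism). But for $C_3\cup C_4$, $C_3\cup C_5$, $3C_3$, $4C_3$ you only assert that a finite analysis or an automorphism-group argument over $K_{3,3,3}$ and $K_{3,3,3,3}$ ``would show'' uniqueness; note that for $n=7$ both $C_7$ and $C_3\cup C_4$ are a priori possible complements of the union, so uniqueness genuinely requires ruling one out, by hand or by computer --- the present paper handles its own small cases ($3C_3$ via the uniqueness of the Steiner triple system STS$(9)$, the rest by computer search), and something of the same concreteness is required here.
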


For other results on different packing problems, we refer the reader to the survey papers \cite{Yap}, \cite{Woz5} and \cite{Woz6}.

To generalize the above mentioned problem of the uniqueness of packing for three copies of a graph, we first clarify  what we mean by  packing of three  copies of a graph, and when two such packings are distinct.
Let $G_1$, $G_2$ and $G_3$ be three copies of a graph $G$ of order $n$. We say that there exists a  packing of three  copies of $G$ into a complete graph $K_n$ if there exist injections $\alpha_i: V(G_i) \rightarrow V(K_n)$,  $i \in \{1, 2, 3\}$, such that, for $i\ne j$, $\alpha_i^*(E(G_i))\cap\alpha_j^*(E(G_j))=\emptyset$, where the mapping $\alpha_i^*: E(G_i) \rightarrow E(K_n)$ is induced by $\alpha_i$.
Two packings of three copies of $G$ are distinct if the graphs $\alpha_1(G)\oplus\alpha_2(G)\oplus\alpha_3(G)$ and $\alpha_1'(G)\oplus\alpha_2'(G)\oplus\alpha_3'(G)$ are not isomorphic. We say that a packing of three copies of $G$ is unique if all packings of three copies of $G$ are isomorphic. 

Note that the problem of the uniqueness of packing of three copies of a graph can be easily  generalized for $l\geq 4$ copies (the definitions are analogous).

It is worth to mention two results on the existence of packing of three copies of a graph. In \cite{Wozniak Wojda}, the following theorem (which yields full characterization of all graphs of order $n$ and size $n-2$ that have a packing of three copies) was proved: 

\begin{tw}
Let $G$ be a graph of order $n$ and size $m$. If $m \leq n-2$, then either there
exists a packing of three copies of $G$ or $G \in \{K_3 \cup 2K_1, K_4 \cup 4K_1\}$.
\end{tw}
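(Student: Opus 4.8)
The plan is to induct on the order $n$, in the spirit of the classical embedding results such as Theorem~\ref{E2}, while tracking one extra combinatorial obstruction that produces the two exceptional graphs. The first observation is that the crude edge count never fails: three edge-disjoint copies use $3m\le 3(n-2)$ edges, and $3(n-2)\le\binom{n}{2}\iff(n-3)(n-4)\ge 0$, which holds for every integer $n$. Hence edge counting alone never obstructs a packing, and the two exceptions must be explained structurally. I expect them to come from the clique bound: two edge-disjoint copies of $K_r$ can share at most one vertex (sharing two would force a common edge), so three edge-disjoint copies of a graph containing $K_r$ occupy at least $3r-3$ vertices, forcing $n\ge 3r-3$. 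Since $K_r\subseteq G$ together with $m\le n-2$ gives $\binom{r}{2}\le m\le n-2$, an obstruction $n<3r-3$ can arise only when $\binom{r}{2}+2\le 3r-4$, i.e. $(r-3)(r-4)\le 0$, i.e. $r\in\{3,4\}$, and then only at $n=5$ and $n=8$ respectively, which are exactly $K_3\cup 2K_1$ and $K_4\cup 4K_1$. This pleasant mirror of the count $(n-3)(n-4)\ge 0$ both predicts the two exceptions and shows that no larger clique can create a new one.

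For the inductive step I would split on the minimum degree. Since $2m\le 2n-4<2n$, the graph has a vertex of degree $\le 1$; more precisely, if no vertex has degree $1$ then every non-isolated vertex has degree $\ge 2$, and a handshake count yields at least $n-m\ge 2$ isolated vertices. I treat the two situations separately. If $G$ has a pendant vertex $v$ with neighbour $u$, I delete $v$ together with its edge; then $G-v$ has order $n-1$ and size $m-1\le(n-1)-2$, so by induction it admits a packing $\alpha_1,\alpha_2,\alpha_3$ (unless $G-v$ is one of the two exceptions, a finite list of resulting graphs $G$ to be checked by hand). Re-inserting $v$, injectivity forces all three copies to send $v$ to the single new vertex $w$ of $K_n$, so the three new edges are $w\alpha_i(u)$, and these are edge-disjoint precisely when $\alpha_1(u),\alpha_2(u),\alpha_3(u)$ are pairwise distinct. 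If instead $G$ has an isolated vertex together with slack $m\le n-3$, deleting it keeps us in range and it may be reinserted at the unique free vertex for free, since it carries no edges.

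This isolates the core case: $m=n-2$, no pendant vertices, and $G=H\cup kK_1$ with $\delta(H)\ge 2$ and $k\ge 2$. Here I would argue that the at-least-two spare isolated vertices give enough room to place the three copies of $H$ so that they overlap pairwise in as few vertices as possible; when the overlaps are kept small, common edges become impossible and the packing succeeds. The failures should occur only when $H$ is so clique-like and the vertex budget so tight that the intersection bound above is violated, and by the first paragraph this forces exactly $H=K_3$ with $n=5$ and $H=K_4$ with $n=8$, i.e. the two listed graphs. For non-complete $H$ the obstruction relaxes, because two copies may then safely share two vertices joined by a non-edge of $H$, so a direct spreading construction (or a secondary induction that peels off a cycle of $H$) should finish these.

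I expect the main obstacle to be twofold. First, in the pendant reduction one must guarantee the distinctness of $\alpha_1(u),\alpha_2(u),\alpha_3(u)$; this does not come for free from the bare inductive hypothesis, so I would either strengthen the induction to produce a packing in which a prescribed vertex receives three distinct images, or repair a bad packing by composing one copy with a transposition supported on currently unused edges. Second, and more seriously, the tight core case $m=n-2$ with clique-like $H$ requires a clean construction placing three copies with controlled pairwise intersection while certifying edge-disjointness, together with an exhaustive argument that no graph other than the two exceptions resists it. Pinning down precisely this boundary, rather than carrying out the generic reductions, is where the real work lies.
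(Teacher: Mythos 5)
First, a point of context: the paper does not prove this statement at all --- it is quoted as a known result of Wo\'zniak and Wojda \cite{Wozniak Wojda}, so there is no in-paper proof to compare against, and your proposal must stand on its own. Its sound parts are the two counting computations. The identity $3(n-2)\le\binom{n}{2}\iff(n-3)(n-4)\ge 0$ correctly shows the global edge count never obstructs, and the clique bound is also right: three pairwise edge-disjoint copies of $K_r$ meet pairwise in at most one vertex, forcing $n\ge 3r-3$, which together with $\binom{r}{2}\le m\le n-2$ pins down $(r,n)\in\{(3,5),(4,8)\}$ and hence exactly $K_3\cup 2K_1$ and $K_4\cup 4K_1$. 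This is a genuinely nice observation --- it proves the \emph{necessity} of the two exceptions and shows no larger clique creates new ones. But the entire content of the theorem is the converse: that \emph{every other} graph with $m\le n-2$ admits a packing, and that direction your sketch does not establish; ruling out clique obstructions does not rule out other obstructions.

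Concretely, the two load-bearing steps are missing, and you flag them yourself. (1) In the pendant reduction your extension scheme is forced (each $\alpha_i$ is injective on $n$ vertices into $V(K_n)$, hence a bijection, so $\alpha_i(v)=w$ for all $i$), and it succeeds only if $\alpha_1(u),\alpha_2(u),\alpha_3(u)$ are pairwise distinct. The bare inductive hypothesis gives no such control; the strengthened hypothesis you gesture at (a packing in which a prescribed vertex receives three distinct images) is exactly the kind of statement that must itself be carried through the induction, with its own base cases and possibly its own exceptional list, and neither it nor the fallback ``repair by a transposition on unused edges'' is formulated or proved. (2) The core case $m=n-2$ with no pendant vertices, $G=H\cup kK_1$, $\delta(H)\ge 2$, $k\ge 2$, is resolved only by the hope that ``small overlaps'' suffice. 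This is where the real difficulty sits: $H$ contains cycles, and short cycles genuinely have no packing of three copies on their own vertex set (as this very paper records for $C_3,\dots,C_6$), so the two spare isolated vertices are doing essential work and an explicit construction, or a carefully controlled secondary induction, is unavoidable --- ``a direct spreading construction should finish these'' is a conjecture, not an argument. Until (1) and (2) are carried out, together with the finite hand-checks when $G-v$ is an exceptional graph, you have a correct prediction of the exceptional set and a plausible scaffold for an induction, but not a proof of the theorem.
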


In \cite{Wang Sauer}, the full characterization of all trees that have a packing of three copies was proved:

\begin{tw}
Let $T$ be a tree of order $n\geq6$ which is neither a star nor a star with one edge subdivided, nor else a 6-vertex star with one edge subdivided twice. Then
there exists a packing of three copies of $T$.
\end{tw}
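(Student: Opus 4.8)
The plan is to reformulate the statement and proceed by induction on the order $n$, removing a leaf at each step. First I would record the arithmetic that pins down the threshold: a packing of three copies of $T$ uses $3(n-1)$ distinct edges of $K_n$, so it requires $3(n-1)\le \binom{n}{2}$, i.e.\ $n\ge 6$, which is exactly the hypothesis. This makes me optimistic that the obstruction is purely \emph{local} (the high-degree centers of the excluded stars), and that once a tree is spread out enough, room is never the real issue. Recall that a packing of three copies amounts to three bijections $\alpha_1,\alpha_2,\alpha_3\colon V(T)\to V(K_n)$ whose edge images are pairwise disjoint; here I only need existence, not control of the isomorphism type of $\alpha_1(T)\oplus\alpha_2(T)\oplus\alpha_3(T)$.

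The engine of the induction would be a clean leaf-extension lemma. Pick a leaf $v$ of $T$ with neighbour $u$, and set $T'=T-v$, a tree of order $n-1$. Suppose $T'$ already has a packing given by bijections $\beta_1,\beta_2,\beta_3\colon V(T')\to V(K_{n-1})$ in which the three images $\beta_1(u),\beta_2(u),\beta_3(u)$ are pairwise distinct. Adjoin a new vertex $w$ to form $K_n$, and extend each copy by $\alpha_i|_{V(T')}=\beta_i$ and $\alpha_i(v)=w$. Then every $\alpha_i$ is a bijection, the images of the edges of $T'$ remain pairwise edge-disjoint inside $K_{n-1}$, and the three new edges $\{\beta_i(u),w\}$ are all incident to $w$, hence disjoint from the old edges and, by distinctness of the $\beta_i(u)$, distinct from one another. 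This yields a packing of three copies of $T$.

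So the real work is twofold. First, I would choose the leaf $v$ so that $T'=T-v$ is again covered by the theorem, that is, $T'$ is not a star, nor a star with one subdivided edge, nor the exceptional $6$-vertex tree; since $T$ is itself none of these, it is far enough from a star that such a leaf exists except for a few trees lying just above the excluded families (spiders and near-stars), which I would treat as extra base cases. Second, and this is the crux, I must guarantee a packing of $T'$ in which the attachment vertex $u$ receives three distinct images. I would handle this by strengthening the induction hypothesis to assert the existence of a packing with this distinctness property for a prescribed vertex, and then propagate that property through the extension step. The base of the induction -- the trees on $n=6$ vertices together with the finitely many boundary trees -- I would settle by explicit constructions, most conveniently by a cyclic placement: label $V=\mathbb{Z}_n$, use the rotation $\sigma\colon x\mapsto x+1$, and exhibit a labeling in which, within each edge-length class, the edge positions have pairwise gaps at least $3$, so that $T,\sigma(T),\sigma^2(T)$ are automatically edge-disjoint.

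The hard part will be the distinctness bookkeeping: general packings allow two copies to place $u$ at the same vertex (only edges must avoid collision), so distinctness of $\beta_1(u),\beta_2(u),\beta_3(u)$ is not automatic, and it cannot be repaired by a blind relabeling of one copy without risking fresh edge collisions. I expect the argument to hinge on choosing $v$ so that $u$ has degree at least $2$ in $T'$, and on carrying the distinctness of a designated vertex as part of the inductive statement, so that the extension both consumes and regenerates the needed freedom. Managing the interaction of this requirement with the avoidance of the excluded families, rather than any single estimate, is where I anticipate the bulk of the case analysis.
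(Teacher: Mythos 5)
This statement is not proved in the paper at all: it is the Wang--Sauer theorem, quoted verbatim as background and cited to \cite{Wang Sauer}, so there is no in-paper proof to compare your attempt against. Judged on its own merits, your text is a plan rather than a proof, and the part you yourself identify as ``the crux'' is exactly the part that is missing and that does not work as stated. The strengthened inductive hypothesis --- for a prescribed vertex $u$ of $T'$ there is a packing with $\beta_1(u),\beta_2(u),\beta_3(u)$ pairwise distinct --- is never established, and, more seriously, it fails to propagate through your own extension step: you send the new leaf $v$ to the same vertex $w$ in all three copies, so the packing of $T$ you construct violates the distinctness property at $v$. Consequently, to prove the strengthened statement for $T$ with an arbitrary prescribed vertex $u^*$, you must remove a leaf $v\neq u^*$ and then require distinctness in $T-v$ at \emph{two} vertices simultaneously (the attachment vertex $u$ and $u^*$), which cascades; the natural way out is to demand a packing in which every vertex receives three pairwise distinct images (equivalently, the permutations $\beta_j\beta_i^{-1}$ are fixed-point-free), but that is a genuinely stronger theorem requiring its own argument, and it is most delicate precisely in your base case $n=6$, where $3(n-1)=\binom{n}{2}$ and any packing is a decomposition of $K_6$ with zero slack, so no freedom can be taken for granted.

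Two further deferred points also need real work. First, the claim that only ``finitely many boundary trees'' have all leaf-removals landing in the excluded families is plausible but unverified; for instance, for the order-$7$ spider with legs $(1,1,1,3)$, deleting the tip of the long leg yields the excluded $6$-vertex star with one edge subdivided twice, and deleting a short leaf yields the excluded spider with legs $(1,1,3)$, so this tree must be found, listed, and given an explicit packing --- you assert such lists and cyclic constructions exist but exhibit none. Second, the rotation idea ($\sigma:x\mapsto x+1$ on $\mathbb{Z}_n$ with edge-length classes having gaps at least $3$) is a reasonable device for base cases, but a labeling with that property is again only asserted. In short: the counting remark $3(n-1)\le\binom{n}{2}$ and the leaf-extension mechanism are correct, but the argument has a genuine gap at its center --- the distinctness bookkeeping is not merely unfinished, the single-vertex version of it is insufficient for the induction you set up.
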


The purpose of this paper is to consider the problem of the uniqueness of packing of three copies of a graph for 2-factors. This problem for 2-factors is related to the well-known Oberwolfach problem which is still open. To present this problem in a formal way, we provide additional definitions. By a \textit{$2$-factorization} of a graph $G$ we mean an edge-disjoint partition of the edge set of $G$ into 2-factors. A 1-factor of a graph $G$ (i.e. a perfect matching of $G$) will be denoted by $I$. The Oberwolfach problem (OP for short) asks whether a complete graph $K_n$ (for $n$ odd), or $K_n$ without a 1-factor (for $n$ even), admits a 2-factorization in which each 2-factor is isomorphic to a given 2-factor. More precisely, an instance OP$(n; n_1, \ldots, n_k)$ of the Oberwolfach problem asks if there is a 2-factorization of $K_n$ for $n$ odd, or $K_n \setminus I$ for $n$ even, such that each 2-factor is isomorphic to $C_{n_1}\cup C_{n_2}\cup \ldots \cup C_{n_k}$. Since the problem was posed in 1967 by Gerhard Ringel, many papers on the topic were published. With an exception of four cases, namely OP$(6; 3^2)$, OP$(9; 4, 5)$, OP$(11; 3^2, 5)$ and OP$(12; 3^4)$, for which solutions do not exist, solutions were obtained for all orders $n \leq 40$ (see \cite{OP1} and \cite{OP2}) and for many special cases (for example OP$(n; r^k, n-rk)$ for all $n \geq 6kr-1$, see \cite{OP3}). For more results on this topic, we refer the reader to the survey \cite{OP4}.

Now, we introduce the relation between the problem of the uniqueness of packing of three copies of a 2-factor and the Oberwolfach problem. Because the sum of three edge disjoint copies (with common vertex set) of any 2-factor is a 6-regular graph, 2-factors for which there exists a packing of three copies shall have order at least seven . 

\begin{obs}\label{existence} 
For any $2$-factor $G=C_{n_1}\cup C_{n_2}\cup \ldots \cup C_{n_k}$ of order $n=n_1+\ldots + n_k$, where $n\geq7$, if there exists a solution for the instance OP$(n; n_1, \ldots, n_k)$ of the Oberwolfach problem then there exists a packing of three copies of $G$.
\end{obs}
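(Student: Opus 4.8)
The plan is to observe that a packing of three copies of the $2$-factor $G$ is the same thing as a collection of three pairwise edge-disjoint spanning subgraphs of $K_n$, each isomorphic to $G$. Since $G$ has order $n$, each injection $\alpha_i\colon V(G_i)\to V(K_n)$ in the definition of packing is in fact a bijection, so every copy $\alpha_i(G)$ is a spanning $2$-regular subgraph of $K_n$ isomorphic to $G$, and the conditions $\alpha_i^*(E(G_i))\cap\alpha_j^*(E(G_j))=\emptyset$ for $i\ne j$ say precisely that these three copies are pairwise edge-disjoint. Hence it suffices to exhibit three pairwise edge-disjoint $2$-factors of $K_n$, each isomorphic to $G$.

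Next I would unpack the hypothesis. A solution of OP$(n;n_1,\ldots,n_k)$ is, by definition, a $2$-factorization of $K_n$ (for $n$ odd) or of $K_n\setminus I$ (for $n$ even) in which every factor is isomorphic to $G$. Each such factor spans all $n$ vertices and is $2$-regular, hence contributes exactly $n$ edges. Counting edges then determines the number of factors: for $n$ odd, $K_n$ has $\binom{n}{2}=\frac{n(n-1)}{2}$ edges, giving $\frac{n-1}{2}$ factors; for $n$ even, $K_n\setminus I$ has $\frac{n(n-1)}{2}-\frac{n}{2}=\frac{n(n-2)}{2}$ edges, giving $\frac{n-2}{2}$ factors.

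Finally I would check that the assumption $n\ge 7$ forces at least three factors. For odd $n\ge 7$ we have $\frac{n-1}{2}\ge 3$, while for even $n$ (so $n\ge 8$) we have $\frac{n-2}{2}\ge 3$; in both cases the bound is tight at the smallest admissible value. Thus the $2$-factorization supplied by the Oberwolfach solution contains at least three factors, any three of which are pairwise edge-disjoint by construction and each isomorphic to $G$. By the first paragraph, these three factors constitute a packing of three copies of $G$, which proves the claim.

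There is essentially no hard step here: the argument is a factor-counting observation together with the trivial remark that distinct factors of a factorization are automatically edge-disjoint. The only point requiring a moment's care is verifying that the bound $n\ge 7$ is exactly what forces the factor count above two in both the odd and the even case.
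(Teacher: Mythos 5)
Your proof is correct and is exactly the argument the paper leaves implicit: a solution of OP$(n;n_1,\ldots,n_k)$ is a $2$-factorization of $K_n$ (or $K_n\setminus I$) into copies of $G$, the edge count yields $\frac{n-1}{2}$ resp.\ $\frac{n-2}{2}$ factors, and $n\geq 7$ guarantees at least three pairwise edge-disjoint factors, which form the desired packing. Your explicit verification that the bound $n\geq 7$ is tight in both parity cases is a worthwhile addition to what the paper states without proof.
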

Remark that the converse of Observation \ref{existence} does not hold; it suffices to find a packing of three copies of $C_4\cup C_5$, because OP$(9; 4, 5)$ has no solution. The packing of three copies of this 2-factor is presented in Section \ref{Remaining_small_cases}. Moreover, considering complements of packings of three copies of 2-factor of order seven and eight, we can easily see that they are isomorphic to a graph of size zero and order seven, and to a perfect matching of order eight, respectively. Therefore the following observation holds:

\begin{obs}\label{uniqueness}
For any $2$-factor $G=C_{n_1}\cup C_{n_2}\cup \ldots \cup C_{n_k}$ of order $n=n_1+\ldots + n_k$, where $n\in\{7,8\}$, if there exists a solution for the instance OP$(n; n_1, \ldots, n_k)$ of the Oberwolfach problem, then there exist a unique packing of three copies of $G$.
\end{obs}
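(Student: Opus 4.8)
The plan is to split the statement into an existence part and a uniqueness part, and to observe that essentially all the work is a degree count. For existence I would invoke Observation~\ref{existence} directly: a solution of the instance OP$(n;n_1,\ldots,n_k)$ yields a packing of three copies of $G$, so at least one packing exists. It remains to show that \emph{every} packing of three copies of $G$ gives the same sum graph up to isomorphism, which is where the assumption $n\in\{7,8\}$ enters.

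First I would count edges and degrees. Since $G$ is a $2$-factor of order $n$ it has exactly $n$ edges, so three pairwise edge-disjoint copies occupy $3n$ edges of $K_n$; as each copy is $2$-regular and they are edge-disjoint, the sum $\alpha_1(G)\oplus\alpha_2(G)\oplus\alpha_3(G)$ is a $6$-regular graph on $n$ vertices. For $n=7$ we have $3n=21=\binom{7}{2}$, while for $n=8$ we have $3n=24=\binom{8}{2}-4$.

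For $n=7$ a $6$-regular graph on $7$ vertices can only be $K_7$ itself, so every packing of three copies has sum graph $K_7$ and hence all packings are isomorphic. For $n=8$ the sum graph is $6$-regular on $8$ vertices, so its complement in $K_8$ is $1$-regular, i.e.\ a perfect matching. I would then use that $H_1\cong H_2$ if and only if $\overline{H_1}\cong\overline{H_2}$, together with the fact that any two perfect matchings on eight vertices are isomorphic, to conclude that every such sum graph is isomorphic to $K_8$ minus a fixed perfect matching. In both cases all packings coincide up to isomorphism, which is precisely the claimed uniqueness.

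Since the conclusion is forced by the regularity and the edge count, I do not expect a genuine combinatorial obstacle; the only step needing an explicit word is the $n=8$ case, where one passes to the complement in order to reduce the isomorphism question to the (trivial) uniqueness of a $1$-regular graph on eight vertices.
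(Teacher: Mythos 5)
Your proposal is correct and takes essentially the same route as the paper: existence via Observation~\ref{existence}, and uniqueness by observing that any packing sum is $6$-regular, so its complement is the empty graph for $n=7$ and a perfect matching for $n=8$, each unique up to isomorphism. Your explicit edge counts and the remark that $H_1\cong H_2$ iff $\overline{H_1}\cong\overline{H_2}$ merely spell out what the paper states in one line before the observation.
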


Note that the above observations can be generalized to the problem of the uniqueness of packing of $l\geq 4$ copies of a graph using similar reasoning. For any 2-factor $G=C_{n_1}\cup C_{n_2}\cup \ldots \cup C_{n_k}$ of order $n=n_1+\ldots + n_k$ with $n\geq 2l+1$ such that OP$(n; n_1, \ldots, n_k)$ has a solution, it is easy to see that there exists a packing of $l$ copies of $G$. Furthermore, for any 2-factor $G=C_{n_1}\cup C_{n_2}\cup \ldots \cup C_{n_k}$ of order $n=n_1+\ldots + n_k$, where $n\in\{2l+1, 2l+2\}$, if there exists a solution for  OP$(n; n_1, \ldots, n_k)$, then there exists a unique packing of $l$ copies of $G$. 

Now, we introduce our main result:

\begin{tw}\label{main}
Let $G=C_{n_1}\cup C_{n_2}\cup \ldots \cup C_{n_k}$ be a vertex-disjoint union of $k$ cycles. For cycles $C_3$, $C_4$, $C_5$, $C_6$ and the graph $2C_3$, there is no packing of three copies. The graphs $C_7$, $C_8$, $C_3 \cup C_4$, $C_4 \cup C_4$, $C_3 \cup C_5$ and $3C_3$ have unique packing of three copies. For any other graph $G$, there exist at least two distinct packings of its three copies.
\end{tw}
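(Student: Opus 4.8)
The plan is to split the theorem into three parts matching its three assertions and attack each with a different tool. First, for the non-existence claims ($C_3, C_4, C_5, C_6, 2C_3$), I would use the degree/size obstruction: a packing of three copies of a $2$-factor $G$ of order $n$ produces a $6$-regular graph on $n$ vertices, which requires $n \geq 7$ and $6n/2 = 3n \leq \binom{n}{2}$, forcing $n \geq 7$. This immediately kills $C_3, C_4, C_5, C_6$ and $2C_3$ (orders $3,4,5,6,6$), so this part is essentially a counting remark and should be dispatched in a sentence or two.

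Second, for the uniqueness claims, I would lean directly on Observations~\ref{existence} and~\ref{uniqueness} together with the known Oberwolfach data. The graphs $C_7, C_8, C_3\cup C_4, C_4\cup C_4, C_3\cup C_5$ all have order $7$ or $8$, so by Observation~\ref{uniqueness} it suffices to check that the corresponding OP instances have solutions (none of them is among the four exceptional unsolvable cases $\mathrm{OP}(6;3^2), \mathrm{OP}(9;4,5), \mathrm{OP}(11;3^2,5), \mathrm{OP}(12;3^4)$), which the cited solvability results for $n \leq 40$ guarantee; uniqueness is then automatic from the complement being edgeless or a perfect matching. The only uniqueness case of order $>8$ is $3C_3$ (order $9$); here Observation~\ref{uniqueness} does not apply, so I would either invoke the solvability of $\mathrm{OP}(9;3^3)$ for existence and then argue uniqueness separately, or give an explicit packing and a direct combinatorial argument that any packing's sum-graph is forced up to isomorphism. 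I expect this last step to require a short dedicated lemma rather than a citation.

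Third, and this is the bulk of the work, for every remaining $G$ I must exhibit \emph{at least two non-isomorphic} packings, i.e. two edge-disjoint triples of copies whose sum graphs $\alpha_1(G)\oplus\alpha_2(G)\oplus\alpha_3(G)$ differ as abstract $6$-regular graphs. The natural strategy is a base-case-plus-extension scheme: establish multiplicity of packings for a finite family of small cases by explicit constructions (distinguishing the two sums by an isomorphism invariant such as girth, number of triangles, degree sequence of some auxiliary subgraph, or vertex-connectivity), and then propagate to all larger $G$ by an inductive gadget that inserts or lengthens a cycle while preserving the existence of two inequivalent sums. Concretely, I would try to show that attaching a further component $C_r$ (or extending one cycle by a path) to a $G$ already known to have two distinct packings yields two distinct packings of the enlarged graph, and separately handle single long cycles $C_n$ for $n \geq 9$ by constructing two packings with provably different girth or triangle count.

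The hard part will be the third block, and within it two difficulties stand out. The first is organizing the induction so that finitely many explicitly verified base cases cover all residues/small configurations; since the classification in Theorem~\ref{Sums} for two copies has exactly this flavour, I would mirror its case division but track $6$-regular rather than $4$-regular sums. The second, and genuinely delicate, difficulty is \emph{proving non-isomorphism} of the two produced sum graphs: exhibiting two packings is routine, but certifying that their sums are not isomorphic demands a robust invariant that survives across the whole infinite family, and guessing a single invariant that works uniformly (as opposed to an ad hoc one per case) is where the real effort and risk lie.
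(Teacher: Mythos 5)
Your first two blocks essentially coincide with the paper's proof. The counting remark (a packing of three copies is $6$-regular, so $n\geq 7$) disposes of $C_3,C_4,C_5,C_6$ and $2C_3$ just as the paper does, and for the order-$7$ and order-$8$ graphs $C_7$, $C_8$, $C_3\cup C_4$, $C_4\cup C_4$, $C_3\cup C_5$ the paper argues exactly as you propose: Observation~\ref{uniqueness} plus solvability of the relevant OP instances, with uniqueness automatic because the complement of the sum is empty or a perfect matching. For $3C_3$ you correctly flag that Observation~\ref{uniqueness} does not apply and a dedicated argument is needed, but you leave it open; the paper closes it in one line by identifying a packing of three copies of $3C_3$ with the Steiner triple system STS$(9)$ (the affine plane of order three), whose uniqueness is classical. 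That citation, not a ``short dedicated lemma,'' is the missing step.

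The genuine gaps are in your third block. First, your inductive gadget ``attach a component $C_r$ to a $G$ already having two distinct packings'' only functions when the attached part has a packing of three copies of its own; it therefore cannot reach the families $C_3\cup C_x$, $C_4\cup C_x$, $C_5\cup C_x$, $C_6\cup C_x$ and $C_3\cup C_3\cup C_x$, since $C_3,\dots,C_6$ and $2C_3$ admit no packing. The paper devotes all of Section~\ref{construction} to precisely these five families, constructing for each one packing that contains a $K_5$ and one that is $K_5$-free (certified by degeneracy and planarity arguments on the extension gadgets), so the distinguishing invariant there is clique number, not girth or triangle count. Second, for single cycles $C_n$, $n\geq 9$, ``two packings with provably different girth or triangle count'' is not a construction: one must actually decompose a $6$-regular graph into three Hamiltonian cycles, and the paper obtains this from Dean's theorem on $6$-regular circulants with a generator of order $n$, then separates $C_n(1,2,3)$ from $C_n(1,4,5)$ (and, for $n\equiv 0 \pmod 4$, $C_{4l}(1,3,5)$ from $C_{4l}(1,3,4)$) by chromatic number and bipartiteness — a nontrivial external input your plan does not identify. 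Third, you miss the paper's pivotal Lemma~\ref{l1}: any disconnected packing can be rewired into a connected one, so whenever $G$ splits into two parts each having a packing, two packings distinguished simply by connectedness follow at once; this single uniform invariant, rather than the ``robust invariant across the whole infinite family'' you worry about guessing, is what trivializes the generic case. Finally, $46$ sporadic small $2$-factors (including $C_4\cup C_5$, $2C_3\cup C_5$ and $4C_3$, where OP has no solution so even existence needs an explicit packing) are settled in the paper by computer-assisted constructions that your outline acknowledges only implicitly; without them the theorem's boundary cases remain unproved.
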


The proof of Theorem \ref{main} is presented in the next sections. Section 2 contains
the case of cycles, Section 3 presents the proof of the existence of packing of three copies of 2-factors (for $k\geq2$) and general strategy of the remaining part of the proof, Section 4 contains the proof of the existence of at least two distinct packings of three copies of 2-factors for five particular families of 2-factors, and the last section presents the proof for the remaining small cases.

\smallskip
\noindent
{\bf Remark.}  To better differentiate between copies of $G$ in a packing (both in subsequent figures and proofs), we say that the first (initial) copy of $G$ is \textit{black}, the second copy of $G$ is \textit{red} and the third copy of $G$ is \textit{blue} in such packing of $G$; this is useful, in particular, when a packing is presented solely by a figure.

For the proof of our main result, we will need the following lemma which generalizes Lemma~6 from \cite{Grzelec Wozniak}.

\begin{lem}
If a graph $G = C_{n_1}\cup C_{n_2}\cup \ldots \cup C_{n_k}$ has a packing $\alpha$ of three copies  such that the graph $\alpha_1(G)\oplus\alpha_2(G)\oplus\alpha_3(G)$ is not connected $($a disconnected packing$)$, then $G$ has another packing $\alpha'$ such that the graph $\alpha_1'(G)\oplus\alpha_2'(G)\oplus\alpha_3'(G)$ is connected $($a connected packing$)$. In particular, the graph $G$ has two distinct packings of three copies. 
\label{l1}
\end{lem}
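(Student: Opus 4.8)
The plan is to show that any disconnected packing can be transformed, while keeping all three copies of $G$ intact, into a packing with strictly fewer components, and then to iterate until the packing becomes connected. Write $B=\alpha_1(G)$, $R=\alpha_2(G)$, $L=\alpha_3(G)$ for the three (black, red, blue) copies; these are three edge-disjoint $2$-factors of the $6$-regular graph $H=B\oplus R\oplus L$, each isomorphic to $G$. If $H$ is disconnected, let $H_1,\dots,H_t$ ($t\ge 2$) be its components. Since every cycle of each colour is connected, it lies inside a single component, so each $H_i$ is itself the edge sum of the portions of $B$, $R$, $L$ supported on $V(H_i)$; in particular each $H_i$ is a connected $6$-regular graph in which every edge is coloured and all the other components' edges lie elsewhere.

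The basic move is a single transposition performed in one colour only. Fix two components, say $H_1$ and $H_2$, a colour $c$ (with corresponding factor $F_c$), vertices $u\in V(H_1)$ and $w\in V(H_2)$, and let $\tau=(u\,w)$. I replace the copy of colour $c$ by its relabelling $\tau(F_c)$ and leave the other two copies unchanged. Since $\tau(F_c)\cong F_c\cong G$, the colour-$c$ copy is again a $2$-factor isomorphic to $G$, so the result is automatically a packing provided it is edge-disjoint. The only edges that change are the four incident to $u$ or $w$ in colour $c$: if the old $c$-neighbours of $u$ were $x_1,x_2\in V(H_1)$ and those of $w$ were $y_1,y_2\in V(H_2)$, the new $c$-edges are $wx_1,wx_2,uy_1,uy_2$. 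Each of these is a cross edge between $H_1$ and $H_2$, whereas in the original disconnected packing every red, blue, and remaining $c$-edge lies inside a single component; hence the new edges meet no other edge of the packing, and edge-disjointness is preserved. Thus the move always yields a valid new packing, and it creates cross edges joining $H_1$ and $H_2$.

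It remains to choose $u$, $w$ and $c$ so that the move strictly decreases the number of components; this is the one genuinely delicate point, since deleting the two $c$-edges at $u$ could in principle split $H_1$ and cancel the merge. I claim it suffices to choose $u$ and $c$ so that $H_1$ with the two $c$-edges at $u$ deleted remains connected. Indeed, this connected spanning subgraph of $H_1$ absorbs all of $V(H_2)$: every component of $H_2$ with the two $c$-edges at $w$ deleted must contain one of $w,y_1,y_2$ (the only deleted edges are incident to these vertices, and $H_2$ was connected), and each of $w,y_1,y_2$ is joined by a cross edge ($wx_1$, $uy_1$, $uy_2$ respectively) to the connected part of $H_1$; hence $V(H_1)\cup V(H_2)$ becomes a single component while the other $H_i$ are untouched. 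To produce such a pair, take any spanning tree $T$ of $H_1$ and let $u$ be a leaf of $T$, which exists since every tree has a leaf. Then $u$ has tree-degree $1$, so its single tree edge belongs to only one colour; choosing a colour $c$ whose two edges at $u$ are both non-tree edges, the deletion of these two edges leaves $T$ intact, and $H_1$ stays connected. With $w\in V(H_2)$ arbitrary and this choice of $u,c$, the move reduces the number of components by exactly one.

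Iterating, the component count strictly decreases and the process terminates in a connected packing $\alpha'$. Since connectedness is an isomorphism invariant, the disconnected graph $\alpha_1(G)\oplus\alpha_2(G)\oplus\alpha_3(G)$ and the connected graph $\alpha_1'(G)\oplus\alpha_2'(G)\oplus\alpha_3'(G)$ are non-isomorphic, so the two packings are distinct, which proves the lemma. As flagged, the main obstacle is guaranteeing that the merging move does not simultaneously shatter a component; the spanning-tree-leaf choice is precisely what rules this out, and it is exactly here that the proof generalizes Lemma~6 of \cite{Grzelec Wozniak} from two colours to three.
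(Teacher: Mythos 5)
Your proof is correct and takes essentially the same approach as the paper: the identical single-colour transposition of two vertices $u,w$ from different components, with edge-disjointness following because the four new edges are cross edges between components, and connectivity secured by choosing the swap vertex so that deleting its two edges of the swapped colour leaves its component connected (the paper achieves this by taking a non-cut vertex, using the four edges of the remaining colours; you achieve it via a spanning-tree leaf and a free choice of colour). Your two deviations are harmless refinements rather than a different route: you need the connectivity condition only on the $H_1$ side, absorbing the possibly split pieces of $H_2$ through the endpoints $w,y_1,y_2$ of the deleted edges, and you iterate a strictly decreasing component count where the paper argues by contradiction from a packing with the minimum number of components.
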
 
\begin{proof} 
Let $\alpha$ be the packing with the smallest number of connected components. If $H=\alpha_1(G)\oplus\alpha_2(G)\oplus\alpha_3(G)$ is connected, we are done.  Otherwise, let $H_1, H_2$ be two components of $H$.

Take a vertex $x_1$  of $H_1$ such that removing the two blue edges $x_1^-x_1$ and $x_1^+x_1$ (where $x_1^-$ and $x_1^+$ are neighbors of $x_1$ on the blue cycle in $H_1$) leaves
$H_1$ connected. In a similar manner, take $x_2$, a vertex belonging to the component $H_2$. Note that such selections of $x_1$ and $x_2$ are always possible --
it suffices to take, in $H_1$ and $H_2$, any vertex that is not a cut vertex (for example, the last vertex on the longest component path).

If now instead of the edges $x_1^-x_1$ and $x_1^+x_1$ we add two blue edges $x_2^-x_1$ and $x_2^+x_1$, and instead of the edges $x_2^-x_2$ and $x_2^+x_2 $ we add two blue edges $x_1^-x_2$ and $x_1^+x_2$, we obtain a new packing $\alpha'$ where two components $H_1$ and $H_2$ become one connected component. However, this is a contradiction with the choice of the packing $\alpha$.
\end{proof}

\section{Cycles}
In this section we prove the following lemma which will be useful in the remaining part of the proof:
\begin{lem}\label{k=1}
Let $C_n$ be a cycle of length $n$. For cycles $C_3$, $C_4$, $C_5$ and $C_6$, there is no packing of three copies.  The cycles $C_7$ and $C_8$ have unique packing of three copies. For longer cycles, there exist at least two distinct packings of three copies.
\end{lem}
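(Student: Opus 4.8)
The plan is to treat the statement in three regimes according to $n$. For $n\le 6$ I would argue non-existence structurally: the edge sum of three pairwise edge-disjoint copies of a $2$-regular graph is $6$-regular, hence can live only on at least seven vertices, whereas $C_3,C_4,C_5,C_6$ have fewer than seven vertices, so no packing of three copies exists. For $n\in\{7,8\}$ I would appeal directly to Observation~\ref{existence} and Observation~\ref{uniqueness}: it suffices to note that OP$(7;7)$ and OP$(8;8)$ are solvable, these being the classical Hamiltonian decompositions of $K_7$ and of $K_8$ minus a perfect matching, respectively, and neither lies among the four exceptional unsolvable instances. Existence of a packing then follows from Observation~\ref{existence} and uniqueness from Observation~\ref{uniqueness}; the underlying reason is that the edge sum is forced --- it is $K_7$ (empty complement) when $n=7$, and $K_8$ minus a perfect matching (complement a perfect matching, and all perfect matchings on eight vertices are isomorphic) when $n=8$.

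The heart of the lemma is the case $n\ge 9$, where I must produce two packings whose edge sums are non-isomorphic $6$-regular graphs. Observe first that Lemma~\ref{l1} cannot be used here: each copy of $C_n$ is a spanning cycle, so every edge sum is connected and there is no disconnected packing to be rerouted. Instead I would exhibit two explicit packings and separate them by an isomorphism invariant, for which the natural choice is the number of triangles of the edge sum. The simplest realization places the vertices on $\mathbb{Z}_n$ and takes each copy to be a jump cycle $i\mapsto i+d$; three jumps $d_1,d_2,d_3$ that are pairwise non-congruent up to sign and each coprime to $n$ produce three edge-disjoint Hamiltonian cycles, that is, a circulant edge sum $C_n(d_1,d_2,d_3)$, in which a triangle occurs precisely when $\pm d_i\pm d_j\equiv \pm d_k\pmod n$ for some indices, not necessarily distinct. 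Hence jumps with no such additive relation give a triangle-free edge sum, whereas jumps satisfying, say, $d_1+d_2=d_3$ force triangles, and the two edge sums, differing in girth, are non-isomorphic.

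The main obstacle is uniformity in $n$, since a single fixed triple of jumps cannot serve all orders because of coprimality and parity. When $n$ is even and large, every jump coprime to $n$ is odd, so a parity count shows the circulant edge sum is automatically triangle-free and a triangle-containing companion packing must be built by a non-circulant modification (for instance a local $2$-switch on one color that creates a triangle while keeping that color a Hamiltonian cycle disjoint from the others); when $n$ is odd and large one may instead pick the two jump-triples directly, subject only to coprimality, which leaves ample room. More seriously, for the small orders $9\le n\le 11$ a triangle-free $6$-regular graph does not exist at all, since Mantel's bound $3n\le \lfloor n^2/4\rfloor$ fails, so the girth dichotomy is simply unavailable; for these finitely many cases I would instead compare the two packings through the number of triangles, or, for $n=9$, through the isomorphism type of the complementary $2$-factor, realizing both a packing whose complement in $K_9$ is $C_9$ and one whose complement is $3C_3$. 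Checking that each of these small, explicitly given configurations is a genuine edge-disjoint packing of three copies of $C_n$, and that the chosen invariant provably takes different values on the two edge sums, is the step that will require the most care.
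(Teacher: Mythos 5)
Your treatment of $n\le 6$ and $n\in\{7,8\}$ matches the paper exactly (6-regularity forces $n\ge 7$; Observations \ref{existence} and \ref{uniqueness} with the solvable instances OP$(7;7)$, OP$(8;8)$). For $n\ge 9$ you take a genuinely different route: the paper also builds circulant edge sums, but it invokes Dean's theorem, which guarantees a Hamiltonian cycle decomposition of any $6$-regular circulant $C_n(a,b,c)$ having at least \emph{one} generator of order $n$, and then distinguishes $C_n(1,2,3)$ from $C_n(1,4,5)$ (with special treatment of $n=11$ and $n\equiv 0 \pmod 4$) by chromatic number and bipartiteness, using the Meszka--Nedela--Rosa and Heuberger results. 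Your plan instead demands that each copy of $C_n$ be a pure jump cycle, which forces \emph{all three} generators to be coprime to $n$, and separates the two edge sums by girth/triangle count.

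That stronger coprimality requirement is where your argument has a genuine hole: for $n=12$ the units modulo $12$ are $\{1,5,7,11\}=\{\pm 1,\pm 5\}$, so there are only \emph{two} jump classes up to sign, and three pairwise edge-disjoint pure jump Hamiltonian cycles do not exist at all. Since your small-case regime is $9\le n\le 11$, the order $n=12$ falls to your general even-$n$ recipe, which therefore produces not even one packing there --- and note $n=12$ is also extremal for Mantel ($3n=n^2/4$), so the only triangle-free candidate is $K_{6,6}$, again not reachable by your construction. This is precisely the obstruction Dean's theorem is imported to remove: generator $1$ always has order $n$, so $C_n(1,2,3)$ and $C_n(1,4,5)$ decompose into three Hamiltonian cycles for every $n$, with no coprimality conditions on the other generators. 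Beyond this, two further pieces of your plan are sketched rather than proved: the $2$-switch that should create a triangle in the bipartite even-$n$ packing (the idea is workable --- e.g.\ replacing $\{0,1\},\{2,3\}$ in the jump-$1$ cycle of $C_n(1,3,5)$ by the jump-$2$ edges $\{0,2\},\{1,3\}$ keeps a Hamiltonian cycle and creates the triangle $\{0,2\},\{2,5\},\{0,5\}$ --- but you must verify it uniformly), and the cases $9\le n\le 11$, where the existence of two packings with your chosen invariants (e.g.\ complements $C_9$ versus $3C_3$ for $n=9$, a Hamilton--Waterloo-type fact) is asserted but not established. To repair the proof, either absorb $n=10,12$ into the finite case analysis and choose the odd-$n$ and even-$n$ jump triples adaptively to the divisors of $n$, or simply replace the pure-jump construction by Dean's theorem as the paper does.
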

\begin{proof}
Obviously, the cycles $C_3$, $C_4$, $C_5$ and $C_6$ do not have packing of three copies because such a packing is always a 6-regular graph. From Observation \ref{uniqueness} we know that $C_7$ and $C_8$ have unique packing of three copies.

We will denote by $C_n(a,b,c)$ the 6-regular circulant graph on $n$ vertices with generators $a$, $b$ and $c$, that is, the graph with vertex set $\mathbb{Z}_n=\{0, \ldots, n-1\}$ and edge set $\{\{x, x+s\}: x\in \mathbb{Z}_n, s\in \{a,b,c\} \}$  (note that the addition is modulo $n$). In \cite{Dean}, Dean proved that every 6-regular circulant graph on $n$ vertices with at least one generator of order $n$ (with respect to the group $\mathbb{Z}_n$) has Hamiltonian cycle decomposition. Thus, it suffices to find, for any $n\geq9$, two nonisomorphic 6-regular circulant graphs on $n$ vertices with at least one generator of order $n$; this confirms that there exist two distinct packings of three copies of a cycle with at least nine vertices.

To distinguish between 6-regular circulant graphs on $n\geq9$ vertices with at least one generator of order $n$, we use their chromatic number. The following two results from \cite{Meszka Nedela Rosa} and \cite{Heuberger} give information on the chromatic number of specific 6-regular circulant graphs:

\begin{tw}\label{c1}
Let $G = C_n(a, b, c)$ be a connected $6$-regular circulant graph, where
$n \geq 7$, $c = a + b$ or $n-c = a + b$ are pairwise distinct positive integers different from $n/2$. Let $\chi(G)$ be the chromatic number of $G$. Then
\begin{itemize}
\item $\chi(G)=7$ if and only if $G \cong K_7 \cong C_7(1, 2, 3)$,
\item $\chi(G)=6$ if and only if $G \cong C_{11}(1, 2, 3)$,
\item $\chi(G)=5$ if and only if $G \cong C_n(1, 2, 3)$ and $n \ne 7, 11$ is not \linebreak divisible by $4$, or $G$ is isomorphic to one of the following circulant graphs: $C_{13}(1, 3, 4)$, $C_{17}(1, 3, 4)$, $C_{18}(1, 3, 4)$, $C_{19}(1, 7, 8)$, $C_{25}(1, 3, 4)$, $C_{26}(1, 7, 8)$, $C_{33}(1, 6, 7)$, $C_{37}(1, 10, 11)$,
\item $\chi(G)=3$ if and only if $n$ is divisible by $3$ and none of $a$, $b$, $c$ is divisible by $3$,
\item $\chi(G)=4$ in all the remaining cases.
\end{itemize}
\end{tw}

\begin{tw}\label{c2}
Let $G$ be a connected circulant graph of order $n$. Then $G$ is bipartite if and only if $n$ is even and all generators are odd.
\end{tw}

From Theorem \ref{c1} we have that there exist two nonisomorphic  6-regular circulant graphs of order $n\geq9$ where $n \ne 11$ and $n\not\equiv 0 \pmod 4$: one can take, for example, $C_n(1, 2, 3)$ which has chromatic number equal to five and $C_n(1, 4, 5)$ which has chromatic number equal to three or four. The same theorem yields that $C_{11}(1, 4, 5)$ has chromatic number equal to four, and  $C_{11}(1, 2, 3)$ has chromatic number equal to six. Hence, it remains to find two nonisomorphic 6-regular circulant of graphs of order $n\geq9$ where $n$ is divisible by four. Using Theorem \ref{c2}, we know that $C_{4l}(1, 3, 5)$ is bipartite whereas $C_{4l}(1, 3, 4)$ is not bipartite. Therefore, there always exist two nonisomorphic 6-regular circulant graphs on $n\geq9$ vertices with one generator of order $n$.  
\end{proof}

\section{General strategy of the the proof for \\ 2-factors where $k\geq2$}
At first, we prove the following useful lemma about the existence of packing of three copies of 2-factors which contain at least two cycles:
\begin{lem}\label{eg}
Let $G$ be a vertex-disjoint union of $k\geq2$ cycles. Then $G$ has a packing of three copies except when $G=2C_3$.
\end{lem}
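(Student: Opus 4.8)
The plan is to separate the single genuine exception from a general construction. First, $2C_3$ has order $6$, so three edge-disjoint copies would form a $6$-regular graph on $6$ vertices, which cannot exist; this accounts for the stated exception. For every other $G=C_{n_1}\cup\cdots\cup C_{n_k}$ with $k\ge2$ we have $n\ge7$. When $n\in\{7,8\}$ the three copies fill $K_n$ (respectively $K_n$ minus a perfect matching), so I would simply apply Observation~\ref{existence} to the instances $\mathrm{OP}(7;3,4)$, $\mathrm{OP}(8;3,5)$ and $\mathrm{OP}(8;4,4)$, each of which is solvable, since none of them is among the four unsolvable Oberwolfach instances recalled in the introduction.

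For $n\ge9$ I would instead construct a packing directly, independently of the (in general open) Oberwolfach problem, by a cut-and-close scheme on the vertex set $\mathbb{Z}_n$. By Lemma~\ref{k=1} --- or, concretely, by applying Dean's theorem \cite{Dean} to the connected $6$-regular circulant $C_n(1,2,3)$ --- there are three pairwise edge-disjoint Hamiltonian cycles $H_1,H_2,H_3$ on $\mathbb{Z}_n$. In each $H_c$ I would delete $k$ edges so as to split $H_c$ into $k$ internally disjoint paths whose orders are exactly $n_1,\ldots,n_k$, and then add, for each path, the chord joining its two endpoints. Every path closes into a cycle of the prescribed length, so each color class becomes a copy of $C_{n_1}\cup\cdots\cup C_{n_k}=G$, and the three color classes together form the desired packing.

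The heart of the matter is edge-disjointness. The retained Hamiltonian edges are pairwise edge-disjoint by construction, so it suffices to choose the $3k$ deleted edges and the $3k$ closing chords so that the chords are mutually distinct and avoid every retained edge of every color. I would arrange this by spreading the $k$ cut points of each $H_c$ far apart and realizing the closing chords with pairwise distinct differences taken from the $\binom{n}{2}-3n$ differences not used by $H_1\cup H_2\cup H_3$; for $n\ge9$ this reservoir is large and the verification is routine but case-laden. I expect this bookkeeping to be the main obstacle: a closing chord can a priori coincide with a small-difference circulant edge (for example when some $n_i-1$ equals a generator), so the cut positions must be coordinated with the generator set, and the situation is most delicate when the cycle lengths $n_i$ are unequal and hence the chord differences heterogeneous. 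Finally, the few small unions with $k\ge2$ whose Oberwolfach instances are unsolvable --- namely $C_4\cup C_5$, $2C_3\cup C_5$ and $4C_3$, corresponding to $\mathrm{OP}(9;4,5)$, $\mathrm{OP}(11;3^2,5)$ and $\mathrm{OP}(12;3^4)$ --- I would dispatch by the explicit packings given in the remaining-cases section, rather than relying on the construction at these tight orders.
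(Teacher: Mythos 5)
Your frame is sound at the boundary cases: ruling out $2C_3$ by the $6$-regularity count, settling $n\in\{7,8\}$ through the solvable instances OP$(7;3,4)$, OP$(8;3,5)$, OP$(8;4,4)$, and deferring $C_4\cup C_5$, $2C_3\cup C_5$ and $4C_3$ to the explicit packings of Section~\ref{Remaining_small_cases}. The genuine gap is the core case $n\geq 9$, which rests on a cut-and-close construction you never actually establish, and which as described does not go through. Dean's theorem gives three pairwise edge-disjoint Hamiltonian cycles decomposing $C_n(1,2,3)$, but no control whatsoever over their cyclic orderings as permutations of $\mathbb{Z}_n$: once you choose the $k$ cut edges on $H_c$ so that the resulting arcs have orders $n_1,\ldots,n_k$, the endpoints of each path --- and hence each closing chord --- are completely determined by the unknown internal structure of $H_c$. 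Your only freedom per color is roughly the $n$ rotations of the cut pattern (times orderings of the $n_i$), so you cannot ``realize the closing chords with pairwise distinct differences taken from the unused differences'': the chord differences are outputs of the cut choice, not parameters selectable from a reservoir of size $\binom{n}{2}-3n$. No counting or averaging argument is offered showing that some cut choice makes all $3k$ chords pairwise distinct and disjoint from every retained edge, uniformly over all $n\geq 9$ and all partitions $n_1+\cdots+n_k=n$; you yourself flag this verification as ``the main obstacle,'' but that verification \emph{is} the lemma. The difficulty is sharpest exactly where you rely on the construction most, namely small $n$ with many short cycles (e.g.\ all $n_i=3$, forcing many chords into a sparse leftover), yet you except only the three OP-unsolvable 2-factors.

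The paper bypasses all of this bookkeeping with an extremal argument. Since $G\neq 2C_3$, Theorem~\ref{Sums} guarantees an embedding of $G$, i.e.\ a packing of two copies, which is a $4$-regular graph; its complement $H$ has $\delta(H)=n-5$, and for $n\geq 14$ one has $n-5\geq\frac{2n-1}{3}$, so the Aigner--Brandt theorem \cite{Aigner Brandt} embeds the third copy of $G$ (any graph with $\Delta=2$) into $H$. The range $7\leq n\leq 14$ is then covered by the known Oberwolfach solutions for all orders $n\leq 40$ \cite{OP1,OP2} via Observation~\ref{existence}, together with the three explicit packings. If you want to salvage your approach, the cleanest repair is to replace the cut-and-close step for large $n$ by exactly this minimum-degree embedding argument, keeping your OP-based treatment for the remaining middle range; otherwise you must supply an actual existence proof for the cut positions, which is a nontrivial piece of combinatorics, not routine bookkeeping.
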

\begin{proof}
In the proof, we use the following Aigner and Brandt result from \cite{Aigner Brandt}:

\begin{tw}\label{AB}
Let $H$ be a graph of order $n$ with $\delta(H)\geq\frac{2n-1}{3}$. Then $H$ contains any graph $G$ of order at most $n$ with $\Delta(G)=2$ $($as a subgraph$)$.
\end{tw}

From Theorem \ref{Sums} we know that the graph $2C_3$ is the only one which is not embeddable, and therefore $2C_3$ also does not have a packing of three copies; hence, we can assume that $n>6$. From the fact that packing of two copies of a 2-factor is always a 4-regular graph, we obtain that its complement $H$ has $\delta(H)=n-1-4=n-5$. From Theorem \ref{AB} we can see that $H$ contains a packing of additional third copy of $G$ if $\delta(H)\geq\frac{2n-1}{3}$. Therefore, for every $n \geq 14$ we have $n-5 = \delta(H) \geq\frac{2n-1}{3}$, and so contains a packing of additional third copy of $G$. This proves that every 2-factor $G$ of order $n\geq 14$ has a packing of three copies.

Using Observation \ref{existence} and the Oberwolfach problem solutions for all orders $n\leq 40$ (see \cite{OP1} and \cite{OP2}), we get that every 2-factor $G$ of order $7\leq n\leq 14$ except for  $C_4\cup C_5$, $2C_3\cup C_5$ and $4C_3$ has a packing of three copies. The packings of three copies of $C_4\cup C_5$, $2C_3\cup C_5$ and $4C_3$ also exist, and they are presented in Section \ref{Remaining_small_cases}.
\end{proof}

Now, we present the general strategy of the remaining part of the proof of our main result. Assume that $G=C_{n_1}\cup C_{n_2}\cup \ldots \cup C_{n_k}$ is a vertex-disjoint union of $k\geq 2$ cycles, where $n=n_1+ \ldots +n_k$. Without loss of generality, assume that $n_1\leq n_2 \leq \ldots \leq n_k$. Note that for all such 2-factors, except for $2C_3$, a packing of three copies exists by Lemma \ref{eg}. From Observation \ref{uniqueness} and the Oberwolfach problem solutions for orders $n\in\{7,8\}$ we know that 2-factors on seven and eight vertices have unique packing of three copies. Therefore, we may assume that $n\geq9$. We consider several cases according to $k$. 

If $k=2$ then $G=C_{n_1}\cup C_{n_2}$ and $n_1\leq n_2$. If $n_1 \geq 7$, we have disconnected packing of $G$ which consists of two components. Each of these components we obtain as a packing of three copies of a cycle from Lemma \ref{k=1}. Thus, by Lemma \ref{l1}, the graph $G$ has two distinct packings of three copies. Therefore, we have to consider four families of 2-factors: $G = C_3 \cup C_x$ where $x \geq 6$, $G = C_4 \cup C_x$ where $x \geq 5$, $G = C_5 \cup C_x$ where $x \geq 5$ and $G = C_6 \cup C_x$ where $x \geq 6$. The uniqueness of packing of three copies of 2-factors from these families is investigated in Section \ref{construction} for $x\geq 11$ and in Section \ref{Remaining_small_cases} for $x\leq 10$.

If $k=3$ then $G=C_{n_1} \cup C_{n_2} \cup C_{n_3}$ and $n_1\leq n_2 \leq n_3$. We can divide $G$ into two subgraphs $G_1=C_{n_1} \cup C_{n_2}$ and $G_2=C_{n_3}$. Thus, from Lemmas \ref{k=1} and \ref{eg} we get a packing of three copies of $G_1$ and $G_2$ except for the case $G=C_3 \cup C_3 \cup C_x$ where $x \geq 3$, and the following twelve 2-factors: \\
$G=C_3 \cup C_4 \cup C_4$,  $G=C_3 \cup C_4 \cup C_5$, $G=C_3 \cup C_4 \cup C_6$, \\
$G=C_4 \cup C_4 \cup C_4$, $G=C_4 \cup C_4 \cup C_5$, $G=C_4 \cup C_4 \cup C_6$,\\
$G=C_4 \cup C_5 \cup C_5$, $G=C_4 \cup C_5 \cup C_6$,\\
$G=C_5 \cup C_5 \cup C_5$, $G=C_5 \cup C_5 \cup C_6$,\\
$G=C_5 \cup C_6 \cup C_6$,\\
$G=C_6 \cup C_6 \cup C_6$.\\
Therefore, by Lemma \ref{l1}, the graph $G$ has two distinct packings of three copies. The uniqueness of packing of three copies of 2-factors from the family $G=C_3 \cup C_3 \cup C_x$ where $x \geq 11$ is investigated in Section \ref{construction} and,  in Section \ref{Remaining_small_cases}, we investigate 2-factors from the family $G=C_3 \cup C_3 \cup C_x$ where $x \in \{3, 4, \ldots, 11\}$ and the above mentioned twelve exceptional 2-factors.

If $k=4$ then $G=C_{n_1} \cup C_{n_2} \cup C_{n_3} \cup C_{n_4}$ and $n_1\leq n_2 \leq n_3 \leq n_4$. If at least two $n_i$ (where $i \in \{1, 2, 3, 4\}$) are different from 3 then we can divide $G$ into two parts $G=G_1 \cup G_2$ such  that both $G_1$ and $G_2$ have packing of three copies by Lemma \ref{eg}. Therefore, by Lemma \ref{l1}, the graph $G$ has two distinct packings of three copies.
We argue similarly when $n_4\geq7$ (however, in this case, we need to use also Lemma \ref{k=1}). Thus, we are left with the following 2-factors: $G=C_3 \cup C_3 \cup C_3 \cup C_3$,  $G=C_3 \cup C_3 \cup C_3 \cup C_4$,  $G=C_3 \cup C_3 \cup C_3 \cup C_5$ and  $G=C_3 \cup C_3 \cup C_3 \cup C_6$; the uniqueness of packing of three copies of these four 2-factors is investigated in Section \ref{Remaining_small_cases}.

If $k=5$ then  $G=C_{n_1} \cup C_{n_2} \cup C_{n_3} \cup C_{n_4} \cup C_{n_5}$ and $n_1\leq n_2 \leq n_3 \leq n_4 \leq n_5$. If $n_5\geq4$, we can divide $G$ into two parts $G=G_1 \cup G_2$ such that $G_1=C_{n_1} \cup C_{n_2} \cup C_{n_3}$ and $G_2=C_{n_4} \cup C_{n_5}$ have packing of three copies by Lemma \ref{eg}. Therefore, by Lemma \ref{l1}, we know that $G$ has two distinct packings of three copies. Thus, we have to investigate the 
uniqueness of packing of three copies of $G=5C_3$; this will be done in  Section \ref{Remaining_small_cases}.

If $k\geq6$ then $G=C_{n_1} \cup C_{n_2}\cup \ldots \cup C_{n_k}$ and $n_1\leq n_2 \leq \ldots \leq n_k$. We can divide $G$ into two parts $G=G_1 \cup G_2$ so that $G_1=C_{n_1} \cup C_{n_2} \cup C_{n_3}$ and $G_2=C_{n_4} \cup C_{n_5} \cup \ldots \cup C_{n_k}$. From Lemma \ref{eg} and the fact that $k\geq 6$, we have packings of three copies of $G_1$ and $G_2$. Therefore, $G$ has a disconnected packing of three copies, and, from Lemma \ref{l1}, we get the connected one. 

\section{Five particular families of 2-factors}\label{construction}
In this section we present two distinct packings of three copies of 2-factors from five families: $C_3 \cup C_x$, $C_4 \cup C_x$, $C_5 \cup C_x$, $C_6 \cup C_x$ and $C_3 \cup C_3 \cup C_x$ where $x \geq 11$. We use the construction approach. The first pre\-sen\-ted packing of three copies of these 2-factors will contain a clique $K_5$ whereas the second one will not. At first, we present the packing with a clique $K_5$. 

We start our construction of packing of three copies of 2-factors with a clique $K_5$ from the smallest graphs in each family, that is, the graphs $C_3 \cup C_{11}$, $C_4 \cup C_{11}$, $C_5 \cup C_{11}$, $C_6 \cup C_{11}$ and $C_3 \cup C_3 \cup C_{11}$. The packing of three copies of these 2-factors is presented in Figures \ref{fig1} and \ref{fig2}.%
\begin{figure}[hp]
\centering
\includegraphics[width=1\textwidth]{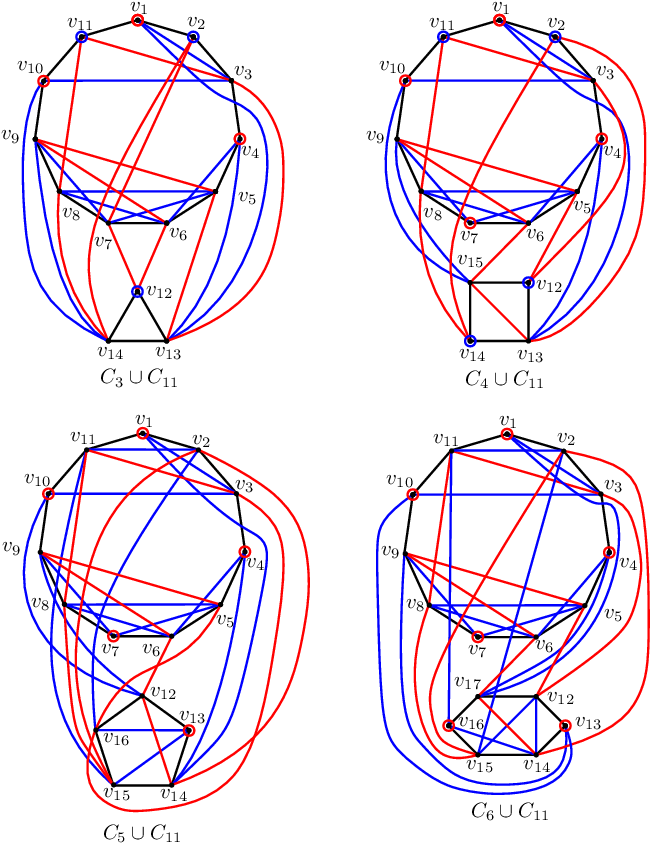}
\caption{The packing of three copies of $C_3 \cup C_{11}$, $C_4 \cup C_{11}$, $C_5 \cup C_{11}$ and $C_6 \cup C_{11}$ with a clique $K_5$.}
\label{fig1}
\end{figure}%
\begin{figure}[h]
\centering
\includegraphics[width=7cm]{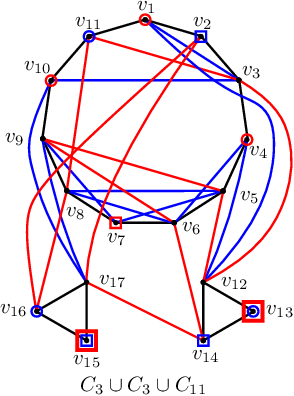}
\caption{The packing of three copies of $C_3 \cup C_3 \cup C_{11}$ with a clique $K_5$.}
\label{fig2}
\end{figure}
For the clarity of drawings, some cycles connecting vertices marked with the same color and type of the marker are not drawn. More precisely, in Figure \ref{fig1}, in the packing (of three copies) of $C_3 \cup C_{11}$, we have also the red cycle $v_1v_4v_{10}v_1$ and the blue cycle $v_2v_{12}v_{11}v_2$. In the packing of $C_4 \cup C_{11}$, we have also the red cycle $v_1v_4v_7v_{10}v_1$ and the blue cycle $v_2v_{12}v_{14}v_{11}v_2$. In the packing of $C_5 \cup C_{11}$, we have also the red cycle $v_1v_4v_{13}v_7v_{10}v_1$ and, in  the packing of $C_6 \cup C_{11}$, we have also the red cycle $v_1v_4v_{13}v_{16}v_7v_{10}v_1$. Similarly, in Figure \ref{fig2}, in the packing of three copies of $C_3 \cup C_3 \cup C_{11}$, we have also red cycles $v_1v_4v_{10}v_1$, $v_7v_{13}v_{15}v_7$ and blue cycles $v_2v_{14}v_{15}v_2$, $v_{11}v_{13}v_{16}v_{11}$. Note that each of these packings contains a subgraph $K_5$ induced by vertices $v_5$, $v_6$, $v_7$, $v_8$ and $v_9$. 

The presented packings of three copies of the smallest graphs from these particular families are easily extendable to appropriate graphs from these families if the longest cycle $C_x$ in the considered 2-factor has odd length. Note that in every already-presented packing, the vertices $v_{10}$, $v_{11}$, $v_1$, $v_2$ and $v_3$ induce almost the same subgraph (up to the edge $v_{11}v_2$ which is not present in the packing of $C_3\cup C_3 \cup C_{11}$). Therefore, we introduce a common extension for these packings in which we will change only the "upper part" of the packing, which contains vertices $v_{10}$, $v_{11}$, $v_1$, $v_2$, $v_3$ and edges incident to them. The method of extension depends on the number of edges added to the longest cycle in each copy of the smallest 2-factor in the respective family. To increase the length of the longest cycle in each copy of the smallest 2-factor in these families by 2, 4 or 6, we use appropriate extension presented in Figure \ref{fig3}.  
\begin{figure}[t]
\centering
\includegraphics[width=10.5cm]{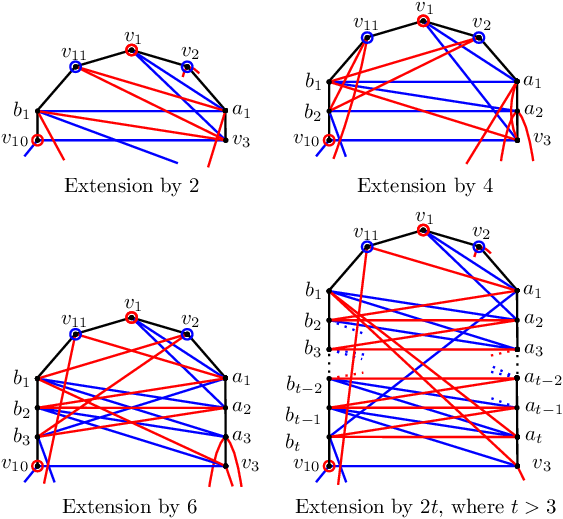}
\caption{The extensions of packings of three copies of 2-factors from five particular families when the longest cycle has odd length.}
\label{fig3}
\end{figure}
For better understanding, we describe each of these extensions in detail.

\textbf{Extension by 2.} At first, we replace black edges $v_2v_3$ and $v_{11}v_{10}$ by black paths $v_2a_1v_3$ and $v_{11}b_1v_{10}$, respectively. Then, we replace red edge $v_{11}v_8$ by red edge $b_1v_8$. Next, we replace the red edge from the "bottom part" to $v_{3}$ by the red edge from the same vertex in the "bottom part" to $a_1$. Then, we replace the blue edge from the "bottom part" to $v_{1}$ by the blue edge from the same vertex in the "bottom part" to $b_1$. At the end, we add red edges $b_1v_3$, $v_{11}a_1$ and the blue path $v_1a_1b_1$.

\textbf{Extension by 4.} At first, we replace black edges $v_2v_3$ and $v_{11}v_{10}$ by black paths $v_2a_1a_2v_3$ and $v_{11}b_1b_2v_{10}$, respectively. Then, we replace the red edge from the "bottom part" to $v_{3}$ by the red edge from the same vertex in the "bottom part" to $a_1$. Next, we replace the blue edge from the "bottom part" to $v_{1}$ by the blue edge from the same vertex in the "bottom part" to $b_2$. Then, we remove the red edge $v_{11}v_3$. Next, we replace two red edges from the "bottom part" to $v_2$ by two red edges from the same vertices in the "bottom part" to $a_2$. At the end, we add the red path $a_1v_3b_1v_2b_2v_{11}$ and the blue path $v_1a_1b_1a_2b_2$.

\textbf{Extension by 6.} At first, we replace black edges $v_2v_3$ and $v_{11}v_{10}$ by black paths $v_2a_1a_2a_3v_3$ and $v_{11}b_1b_2b_3v_{10}$, respectively. Then, we replace the blue edge from the "bottom part" to $v_{1}$ by the blue edge from the same vertex in the "bottom part" to $b_3$. Then, we remove the red edge $v_{11}v_3$ and the blue edge $v_1v_3$. Next, we replace two red edges from the "bottom part" to $v_2$ by two red edges from the same vertices in  the "bottom part" to $a_3$. At the end, we add the red path $v_3b_1v_2b_3a_2b_2a_1v_{11}$ and the blue path $v_3b_2a_3b_1a_2v_1a_1b_3$.

Now, we describe how to create the packing of three copies of a 2-factor with the longest cycle $C_{11+2t}$, for $t>3$, from the smallest 2-factor in the respective family. At first, we replace black edges $v_2v_3$ and $v_{11}v_{10}$ by black paths $v_2a_1a_2\ldots a_tv_3$ and $v_{11}b_1b_2\ldots b_tv_{10}$, respectively. 
Then, we replace the blue edge from the "bottom part" to $v_{1}$ by the blue edge from the same vertex in the "bottom part" to $b_t$. 
Then, we remove the red edge $v_{11}v_3$ and the blue edge $v_1v_3$. 
At the end, we add the red path $v_3b_1a_tb_ta_{t-1}b_{t-1}\ldots a_2b_2a_1v_{11}$ and the blue path $v_3b_{t-1}a_tb_{t-2}a_{t-1}\ldots b_2a_3b_1a_2v_1a_1b_t$. 
This extension of packing of three copies is also presented in Figure \ref{fig3}. 

Now, we show how to obtain a packing of three copies of a 2-factor from each of these families, if the longest cycle in 2-factor has an even length $x$. 
We take the packing of three copies of a 2-factor in which the longest cycle has length $x-1$. 
We replace: the black edge $v_1v_2$ by the black path $v_1v_cv_2$, the blue edge $v_{10}v_3$ by the blue path $v_{10}v_cv_3$, the red edge $v_{11}v_8$ by the red path $v_{11}v_cv_8$  (if $x=14$, we replace the red edge $v_{11}a_1$  by the red path $v_{11}v_ca_1$). Thus, we obtain a packing of three copies of a 2-factor from each of these families if the longest cycle in 2-factor has an even length. Note that in each packing which we obtain using the above extensions we have induced subgraph $K_5$ on vertices $v_5$, $v_6$, $v_7$, $v_8$ and $v_9$.

Next, we present packings of three copies of 2-factors from these five particular families without a clique $K_5$. 
The following observation is particularly useful for the construction:
\begin{obs}\label{obs: extension_K5_free}
Let $B$ be a union of cycles, let $q \geq 3$, and let $G$ be an instance of a $K_5$-free packing of three copies of $B \cup C_q$.
If there are three independent edges $e_1$, $e_2$, and $e_3$ on $C_q$ in the black, the blue, and the red copy of $B \cup C_q$, then there is a $K_5$-free packing of three copies of $B \cup C_{q+1}$.
\end{obs}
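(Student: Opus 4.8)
The plan is to add a single new vertex $w$ to the given $K_5$-free packing $H=\alpha_1(B\cup C_q)\oplus\alpha_2(B\cup C_q)\oplus\alpha_3(B\cup C_q)$ and to lengthen each of the three monochromatic $q$-cycles to a $(q+1)$-cycle by routing it through $w$. Writing $e_1=x_1y_1$, $e_2=x_2y_2$ and $e_3=x_3y_3$ for the prescribed black, blue and red edges of $C_q$, I would, for each colour $i\in\{1,2,3\}$, delete $e_i$ and add the two edges $x_iw$ and $y_iw$ of the same colour. In colour $i$ this replaces the edge $x_iy_i$ of the black, blue or red $C_q$ by the path $x_iwy_i$, turning that $C_q$ into a $C_{q+1}$ while leaving the component $B$ of that colour untouched; hence each of the three new colour classes is isomorphic to $B\cup C_{q+1}$.

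The first thing to check is that this really is a packing, i.e.\ that the three new colour classes are pairwise edge-disjoint and that the union is simple. All six added edges are incident to the brand-new vertex $w$, so none of them coincides with an edge already present in $H$. Moreover, since $e_1,e_2,e_3$ are independent, their six endpoints $x_1,y_1,x_2,y_2,x_3,y_3$ are pairwise distinct, so the six edges at $w$ are distinct and $w$ receives exactly two edges of each colour. Consequently $w$ has degree $2$ in every colour, so each colour class is again a vertex-disjoint union of cycles isomorphic to $B\cup C_{q+1}$, and the three classes remain edge-disjoint. Denote the resulting packing by $H'$.

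It remains to verify that $H'$ is $K_5$-free, which I expect to be the only real point of the argument. Deleting $w$ from $H'$ leaves the subgraph $H-\{e_1,e_2,e_3\}$ of $H$, which is $K_5$-free because $H$ is; hence every $K_5$ of $H'$ would have to contain $w$, and would therefore require a $K_4$ inside the neighbourhood $N_{H'}(w)=\{x_1,y_1,x_2,y_2,x_3,y_3\}$. Here the key structural fact is that in a packing the three colour classes are edge-disjoint, so $H$ has at most one edge between any two vertices; since the unique edge joining $x_i$ and $y_i$ was the colour-$i$ edge $e_i$, which we deleted, the vertices $x_i$ and $y_i$ are non-adjacent in $H'$ for each $i$. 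Thus $N_{H'}(w)$ splits into the three non-adjacent pairs $\{x_i,y_i\}$, and by pigeonhole any four of these six vertices must contain a whole such pair; so $N_{H'}(w)$ induces no $K_4$ and $w$ lies in no $K_5$. Therefore $H'$ is a $K_5$-free packing of three copies of $B\cup C_{q+1}$, as required.
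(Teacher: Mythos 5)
Your proof is correct and follows essentially the same route as the paper: add one new vertex $w$, replace each $e_i$ by the path through $w$, note that any $K_5$ in the new graph must contain $w$ (since no edge between old vertices was added), and then rule it out because any four of the six neighbors of $w$ must, by pigeonhole, contain a pair $\{x_i,y_i\}$ that is no longer adjacent. You are in fact slightly more careful than the paper, making explicit both the verification that the result is a valid packing and the fact that $e_i$ was the \emph{unique} edge joining its endpoints (so they are genuinely non-adjacent after deletion), points the paper's proof uses implicitly.
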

\begin{proof}
Let $e_1 = x_1x_2$, $e_2 = y_1y_2$, and $e_3 = z_1z_2$.
Let $w$ be a new vertex.
Replace edges $x_1x_2,y_1 y_2, z_1z_2$ with edges $x_1w, wx_2, y_1w, wy_2, z_1w, wz_2$.
The new graph $G'$ is a packing of three copies of $C_3 \cup C_{q+1}$.
Moreover, since no edges were added between the vertices of $V(G)$, the 5-clique, if there is one, contains $w$.
Let there be a 5-clique $A$ containing $w$ in $G'$.
This clique contains four neighbors of $w$, and, therefore, it contains two neighbors of $w$ which were adjacent in $G$ but are not adjacent in $G'$; this, however,  contradicts the fact that $A$ is a clique.
\end{proof}

Note that, instead of adding one new vertex, it is possible to add $k$ vertices at once, if there are $k$ pairwise edge-disjoint matchings, 
each consisting of three edges of $G$ lying on $C_q$ in the black, the blue, and the red subgraph of $G$, respectively.  
We will use this fact later.

\begin{figure}[h]
    \centering
    \includegraphics[width=6.5cm]{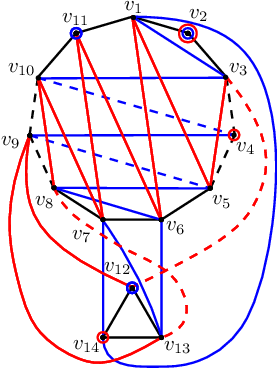}
    \caption{$K_5$-free packing of three copies of $C_3 \cup C_{11}$.}
    \label{fig: C3_C11_K5-free}
\end{figure}

Consider first the packing of three copies of $C_3 \cup C_{11}$ in Figure~\ref{fig: C3_C11_K5-free}; note that this graph is $K_5$-free, which can be checked using computer. We now show how to extend the packing in Figure~\ref{fig: C3_C11_K5-free} to a $K_5$-free packing of three copies of $C_3 \cup C_{11 + 4t}$ for some positive integer $t$.
Denote by $G$ the graph in Figure~\ref{fig: C3_C11_K5-free}.
Let $G'$ be a graph obtained from $G$ after removing edges $v_3v_4, v_3v_{12}, v_4v_5, v_4v_{10}, v_5v_9, v_8v_9, v_8v_{13}, v_9v_{10}$ (dashed edges in Figure~\ref{fig: C3_C11_K5-free}), adding new vertices $a_i, b_i, c_i, d_i$ for $i \in \{1, \dots, t\}$, and adding edges of the (black) paths $v_3(a_i)_{i=1}^tv_9$, $v_5(b_i)_{i=1}^tv_4$, $v_{10}(c_i)_{i=1}^tv_9$ and $v_8(d_i)_{i=1}^tv_9$,
(blue) paths $v_{10}(a_ic_i)_{i=1}^tv_4$ and $v_5(d_ib_i)_{i=1}^tv_9$,
and (red) paths $v_3(b_ia_i)_{i=1}^tv_{12}$ and $v_8(c_id_i)_{i=1}^tv_{13}$.
For an overview of the added part, see Figure~\ref{fig: C3_C11_K5-free_added_part_4t}. 

\begin{figure}[hp]
    \centering
    \includegraphics[width=12.5cm]{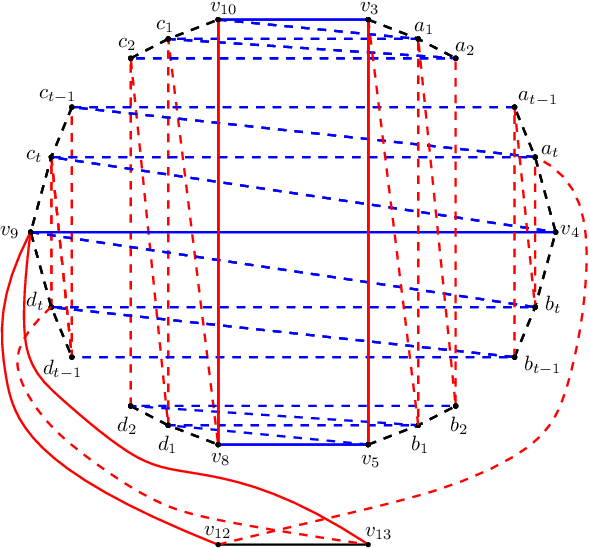}
    \caption{The subgraph $H$ of the $K_5$-free packing of three copies of $C_3 \cup C_{11+4t}$ induced on $4t$ added vertices and their neighbors. Dashed edges are newly added edges, full edges are old edges.}
    \label{fig: C3_C11_K5-free_added_part_4t}
\end{figure}

In the following, we refer to vertices from $\{v_1, \dots, v_{14}\}$ as old vertices, and other vertices of $G'$ as new vertices.

It follows from the construction that $G'$ is a packing of three copies of $C_3 \cup C_{11+4t}$, however, the absence of $K_5$ in $G'$ is not clear. 
Therefore, suppose to the contrary that there is a copy of $K_5$ in $G'$. Let $H$ be a subgraph of $G'$ induced on new vertices and their neighbors (see Figure~\ref{fig: C3_C11_K5-free_added_part_4t}).
Note that no edge between two old vertices was added in the construction; hence, if there is a clique on five vertices, it contains at least one new vertex.
Thus, each 5-clique in $G'$ is a 5-clique in $H$.
Note also that $H-a_1$ is 3-degenerate (consider for example the ordering $v_{12}, v_{13}, v_3, v_{10}, v_8, v_5, c_1, d_1, \dots, c_t, d_t, v_9, b_1, a_2, b_2, \dots a_t, b_t, v_4$);
hence, if there is a 5-clique in $H$, then it contains $a_1$.
However, $a_1$ and its neighbors induce a planar graph, see Figure~\ref{fig: neighbors_a1}.
Hence, there is no 5-clique in $G$.

\begin{figure}[hp]
    \centering
    \includegraphics[width=10cm]{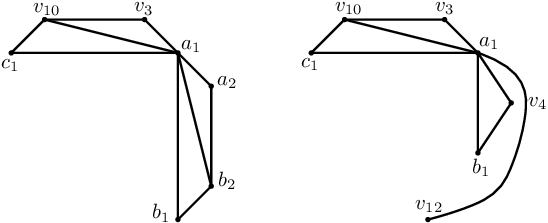}
    \caption{The subgraph induced on $N(a_1) \cup \{a_1\}$ for $t \geq 2$ (left) and $t = 1$ (right).}
    \label{fig: neighbors_a1}
\end{figure}

Using Observation~\ref{obs: extension_K5_free} for all three (pairwise edge-disjoint) matchings $v_1v_3, v_6v_{11}, v_7v_8$, $v_1v_5, v_6v_{13}, v_{10}v_{11}$, and $v_1v_{11}, v_3v_{10}, v_9v_{12}$ at once, we get that there is a $K_5$-free packing of $C_3 \cup C_{11 + 4t + q}$ for every nonnegative integer $t$ and every $q \in \{0,1,2,3\}$.
This covers all the cases of packings of three copies of $C_3 \cup C_x$, $x \geq 11$.
Moreover, such a construction does not remove any of the edges of the initial black, blue, or red copy of $C_3$; this will be useful to extend created $K_5$-free packings of three copies of $C_3 \cup C_x$ to $K_5$-free packings of $C_y \cup C_x$ for $y \in \{4,5,6 \}$. 
To obtain a packing of three copies of $C_4 \cup C_x$, apply Observation~\ref{obs: extension_K5_free} for $B = C_x$, $q = 4$, $e_1 = v_{12} v_{13}$, $e_2 = v_2v_{11}$, and $e_3 = v_4v_{14}$; for later use, denote the newly added vertex by $w_1$.

Note that there are no larger sets of matchings of desired properties in the packing of three copies of $C_3 \cup C_x$ that could be used to extend them to a packing of three copies of $C_5 \cup C_x$ or $C_6 \cup C_x$ at once.
However, we can do it in steps. 
Observe that, in the created packing of three copies of $C_4 \cup C_x$, the black edge $w_1v_{13}$, the blue edge $v_{11}v_{12}$, and the red edge $v_2 v_{14}$ form a matching, and we can make use of Observation~\ref{obs: extension_K5_free} to obtain a $K_5$-free packing of three copies of $C_5 \cup C_x$; denote the newly added vertex by $w_2$ (see Figure~\ref{fig: extending_triangles}).
To obtain a packing of three copies of $C_6 \cup C_x$, simply repeat the previous step for edges $v_{13}v_{14}, v_{11}w_2, v_2v_4$.

\begin{figure}[h]
    \centering
    \includegraphics[width=13.7cm]{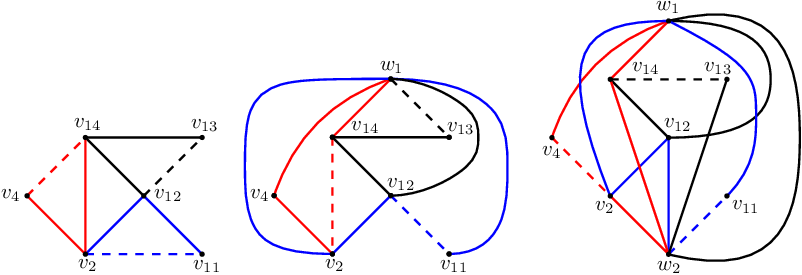}
    \caption{Extensions of monochromatic triangles. Dashed edges are edges of a matching for which Observation~\ref{obs: extension_K5_free} is used.}
    \label{fig: extending_triangles}
\end{figure}

Finally we show how to obtain a $K_5$-free packing of three copies of \linebreak$C_3 \cup C_3 \cup C_x$ for $x \geq 11$. The process is similar to the case of $K_5$-free packings of three copies of $C_3 \cup C_x$. We start with the initial packing $G$ of three copies of $C_3 \cup C_3 \cup C_{11}$ displayed in Figure~\ref{fig: C3_C3_C11_K5_free}. Using a computer check, one can find that such a packing is $K_5$-free (in this case, the size of the maximum clique is three).

\begin{figure}[h]
    \centering
    \includegraphics[width=7cm]{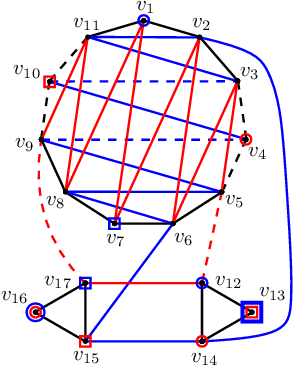}
    \caption{$K_5$-free packing of three copies of $C_3 \cup C_3 \cup C_{11}$.}
    \label{fig: C3_C3_C11_K5_free}
\end{figure}

We show how to extend $G$ to a $K_5$-free packing of three copies of $C_3 \cup C_3 \cup C_{11+4t}$ for some positive integer $t$, using a similar approach as previously.
Let $G'$ be a graph obtained from $G$ by adding $4t$ new vertices, namely $a_i,b_i,c_i,d_i$ for $i \in \{1, \dots, t\}$, removing the edges  $v_3v_4, v_3v_{10}, v_4v_5, v_4v_9,v_5v_{12}, v_9v_{10}, v_9v_{17}, v_{10}v_{11}$ (dashed edges in Figure~\ref{fig: C3_C3_C11_K5_free}), and adding 
(black) paths $v_{11}(a_i)_{i=1}^tv_{10}$, $v_3(b_i)_{i=1}^tv_4$, $v_9(c_i)_{i=1}^tv_{10}$, and $v_5(d_i)_{i=1}^tv_4$, 
(blue) paths $v_{11}(b_ia_i)_{i=1}^tv_4$ and $v_5(c_id_i)_{i=1}^tv_{10}$, 
and (red) paths $v_9(a_ic_i)_{i=1}^tv_{17}$ and $v_5(b_id_i)_{i=1}^tv_{12}$ (see Figure~\ref{fig: C3_C3_C11_K5-free_added_part_4t}).

\begin{figure}[h]
    \centering
    \includegraphics[width=12.5cm]{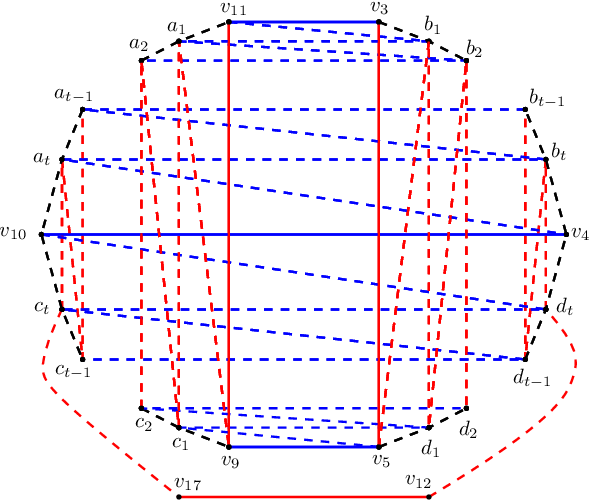}
    \caption{The subgraph $H$ induced on new vertices and their neighborhood in the $K_5$-free packing of three copies of $C_3 \cup C_3 \cup C_{11+4t}$. Dashed edges are newly added edges.}
    \label{fig: C3_C3_C11_K5-free_added_part_4t}
\end{figure}

Clearly, the presented construction creates a packing of three copies of $C_3 \cup C_3 \cup C_{11+4t}$. Suppose that a 5-clique $A$ was created in the process. Since no edge between two old vertices (that is, the vertices from $\{v_1, \dots, v_{17}\}$) was added, at least one of the vertices of $A$ is a new vertex. Hence, the 5-clique is present in a subgraph $H$ induced on new vertices and their neighbors, see Figure~\ref{fig: C3_C3_C11_K5-free_added_part_4t}.

Vertices $v_{12}$ and $v_{17}$ are of degree two in $H$, and the vertex $v_3$ is of degree three in $H$, hence, none of them is in $A$.

Vertices $v_4$, $v_9$, $v_{10}$ and $v_{11}$ are of degree four in $H$. Thus, if any of them is contained in the 5-clique $A$, then all its neighbors are from $A$; however, for each of these vertices, there are two of its neighbors that are not adjacent. Namely, for $v_9$, the vertices $v_5$ and $v_{11}$ are not adjacent, for $v_{11}$, the vertices $v_3$ and $v_9$ are not adjacent, and, for $v_4$ and $v_{10}$, the vertices $a_t$ and $d_t$ are not adjacent. Since the degree of $v_5$ in $H$ is five and at least two of its neighbors are not from the 5-clique $A$ (namely $v_3$ and $v_9$), we get that $v_5 \notin A$. Hence, $A$ contains only new vertices. However, the subgraph of $G'$ induced on new vertices is 3-degenerate (consider the vertex ordering $b_1, a_1, b_2 \dots, a_t, c_1, d_1, c_2, \dots, d_t$) and, therefore, it does not contain a 5-clique.

Now, we show how to extend the constructed $K_5$-free packing of three copies of $C_3 \cup C_3 \cup C_{11+4t}$ to a $K_5$-free packing of three copies of \linebreak $C_3 \cup C_3 \cup C_{11+4t+q}$ for $q  \in \{1,2,3\}$.
In all cases, use Observation~\ref{obs: extension_K5_free} for three (pairwise edge-disjoint) matchings $v_1v_2, v_{14}v_{15}, v_3v_5$, $v_2v_3, v_6 v_{15}, v_9v_{11}$, and $v_6 v_7, v_5v_8, v_9v_{17}$.
This completes the proof in case of five particular families of 2-factors.

\section{Remaining small cases}\label{Remaining_small_cases}

This section contains the discussion on the uniqueness of packings of three copies of small 2-factors which were not treated by general constructions in Section \ref{construction}. First, note that the packing of three copies of $3C_3$ is unique, as it corresponds to Steiner triple system STS(9) on nine points (equivalently, to the affine plane of order three), which is unique. For the remaining 46 small 2-factors, Table \ref{46_small_cases} contains description of two distinct packings of three copies of particular 2-factors; each of three 2-factors is presented as a collection of sequences of vertices of its cycles. Moreover, we present two distinct packings of three copies of these 2-factors for which the Oberwolfach problem has not solution in Figures \ref{fig4}, \ref{fig5} and \ref{fig6}.

\begin{figure}[hp]
\centering
\includegraphics[width=\textwidth]{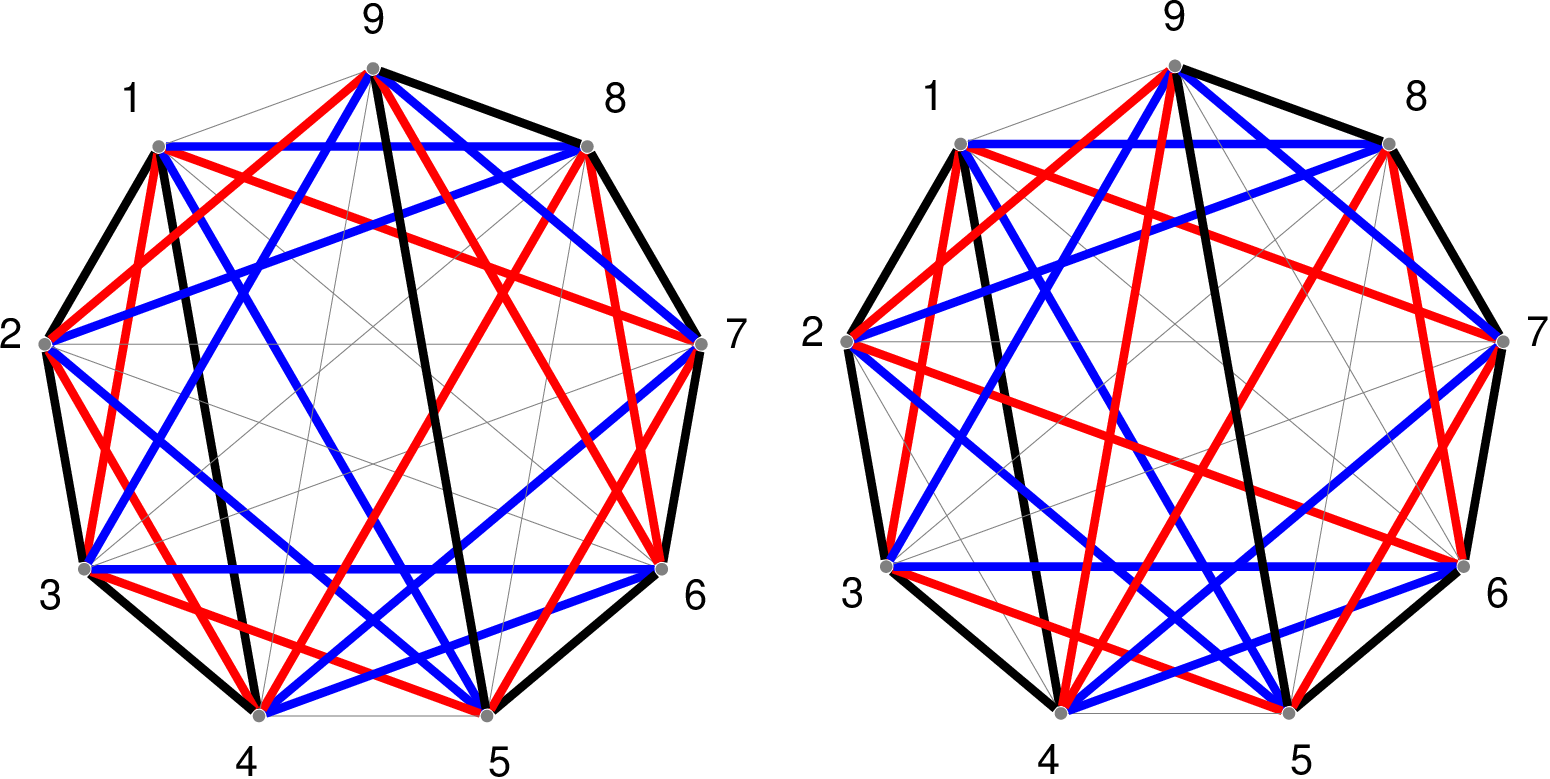}
\caption{Two distinct packings of three copies of $C_4 \cup C_5$.}
\label{fig4}
\end{figure}

\begin{figure}[hp]
\centering
\includegraphics[width=\textwidth]{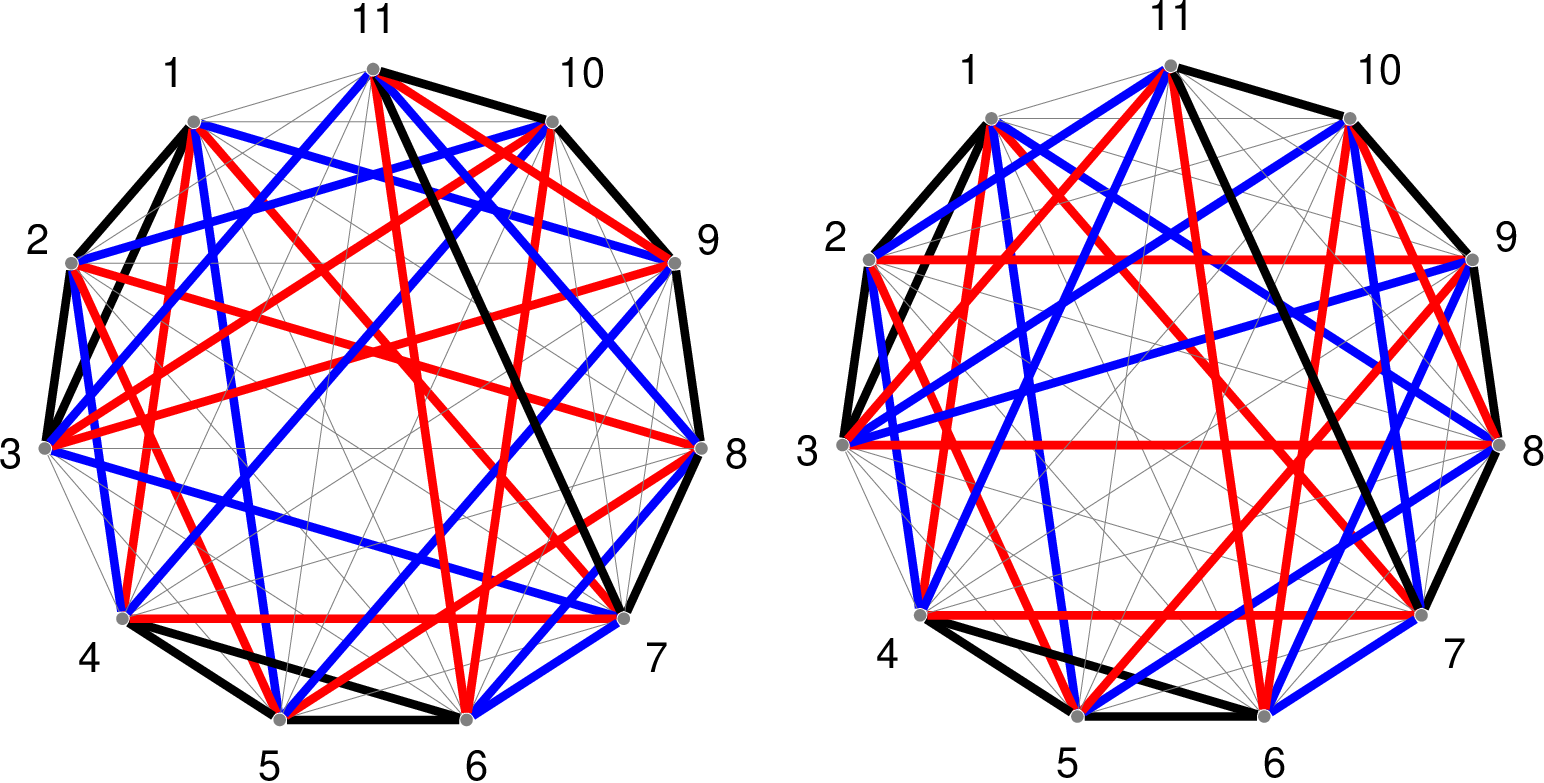}
\caption{Two distinct packings of three copies of $2C_3 \cup C_5$.}
\label{fig5}
\end{figure}

\begin{figure}[h]
\centering
\includegraphics[width=\textwidth]{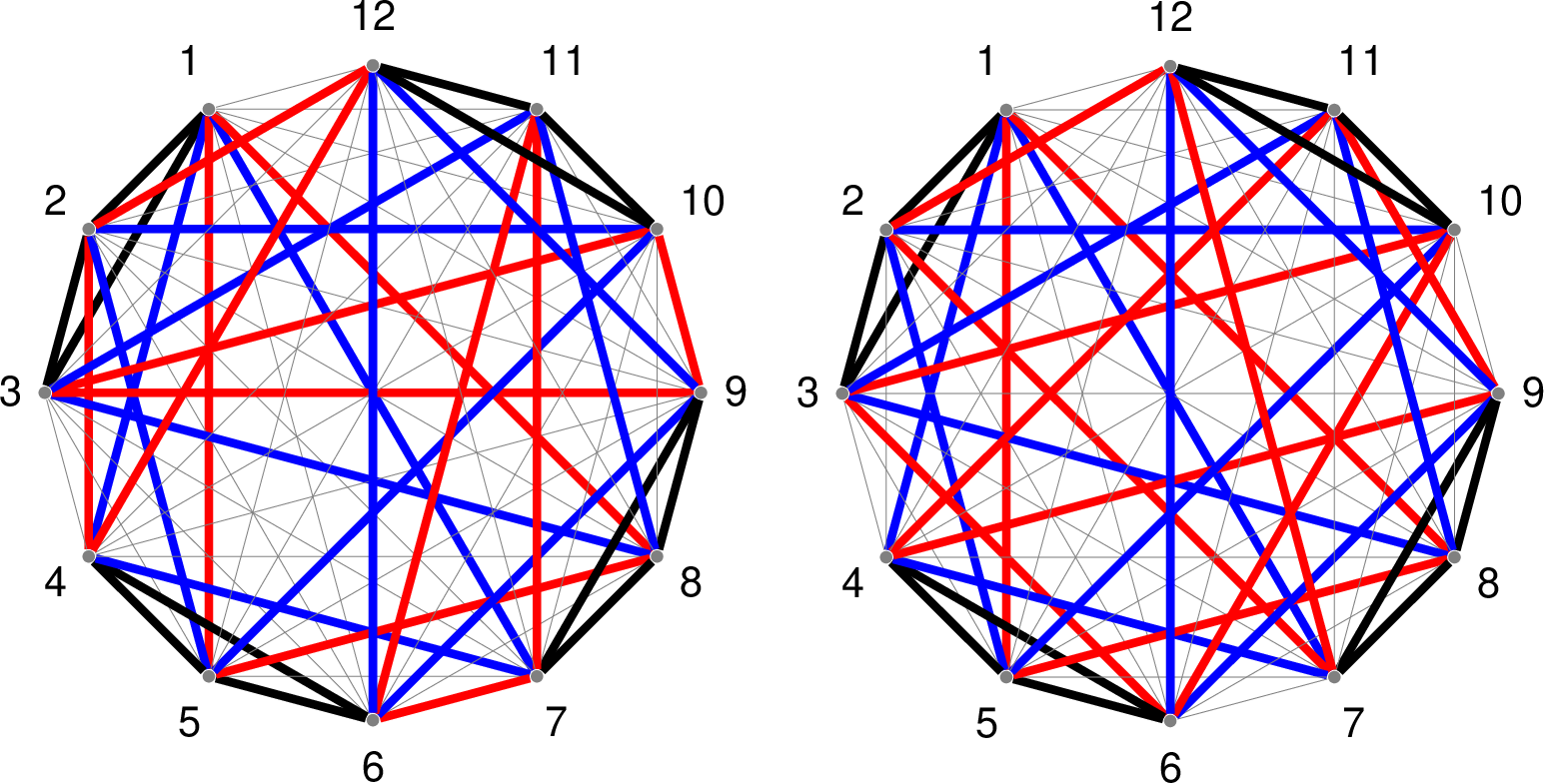}
\caption{Two distinct packings of three copies of $4C_3$.}
\label{fig6}
\end{figure}

For computer-assisted finding of these distinct packings, we used Wolfram Mathematica computer algebra system with its graph theory procedures. About the half of cases was solved using the following strategy: considering a 2-factor $H \cong C_{n_1} \cup \ldots \cup C_{n_k}$, we first removed from the complete graph $K_{n_1 + \ldots + n_k}$ the cycles $(1, \dots, n_1), (n_1 + 1, n_1 + 2, \ldots, n_1 + n_2), \dots , (n_1 + \ldots + n_{k-1}+1, \dots, n_1 + n_2 + \ldots + n_k)$. In the resulting graph $G_1$, we were looking for several distinct subgraphs isomorphic to $H$ (using the procedure {\tt FindIsomorphicSubgraph[$G$,$H$,$p$]} which allows to find either all, or at most $p$  distinct copies of $H$ in $G$). Among the graphs resulted from removing these subgraphs from $G_1$, we were looking for two nonisomorphic graphs $G_2',G_2''$, and, in them, we again looked for distinct subgraphs isomorphic to $H$. Finally, in two collections of graphs obtained from $G_2',G_2''$ by third removal of isomorphic copies of $H$, we were able to find two nonisomorphic graphs; their complements yielded the desired distinct packings.

We have to notice that this strategy failed when $k = 3, n_1 \geq 4, n_2 \geq 5$ and, also, when $k \geq 4$: the procedure {\tt FindIsomorphicSubgraph[$G_1$,$H$,$p$]} was able to find only at most three distinct copies of $H$ in $G_1$ (higher values of $p$ resulted in computation crash), and further attempts to look for copies of $H$ in $G_2', G_2''$ have led to computation crash or to isomorphic graphs. To overcome these obstacles, we have generated, for each of the remaining cases, a collection of distinct packings of two copies of $H$ using the code in Python (which took a fixed 2-factor, then renamed its vertices using a random permutation to obtain another 2-factor with the same cycle structure, and then checked whether these two 2-factors are edge disjoint and forming together a 4-regular graph). These packings of two copies of $H$ were first removed from $K_{n_1 + \ldots + n_k}$ and, in each of the obtained graphs, a single third copy of $H$ was searched for (again using {\tt FindIsomorphicSubgraph[]} procedure). Among the graphs resulted from this removal, we searched for a pair of nonisomorphic ones (again, their complements yielded the desired packings).

\begin{table}[hp]
\begin{tiny}
\begin{center}
\begin{tabular}{||c|l|l||}
\hline
\hline
2-factor & First packing & Second packing \\
\hline\hline
$C_3 \cup C_6$ & $(1,2,3),(4,5,6,7,8,9) $ & $(1,2,3),(4,5,6,7,8,9) $  \\
 & $(3,4,6), (1,8,2,5,7,9)$ & $(3,4,6), (1,5,9,7,2,8)$ \\
 & $(2,6,9), (1,4,8,5,3,7)$ & $(2,6,9),(1,4,8,5,3,7) $ \\
$C_3 \cup C_7$ & $(1,2,3),(4,5,6,7,8,9,10)$ & $(1,2,3),(4,5,6,7,8,9,10)$ \\
& $(1,4,6),(2,5,3,8,10,7,9)$ & $(1,4,6),(2,5,7,9,3,8,10)$\\
& $(1,5,7),(2,4,8,6,9,3,10)$ & $(1,5,8), (2,4,3,7,10,6,9)$\\
$C_3 \cup C_8$ & $(1,2,3),(4,5,6,7,8,9,10,11)$ & $(1,2,3),(4,5,6,7,8,9,10,11)$ \\
&$(1,4,6),(2,5,3,7,9,11,8,10)$ & $(1,4,6),(2,5,3,8,10,7,11,9)$\\
&$(1,5,7),(2,8,3,9,4,10,6,11)$ & $(1,5,7),(2,4,8,11,3,9,6,10)$\\
$C_3 \cup C_9$ & $(1,2,3),(4,5,6,7,8,9,10,11,12)$ & $(1,2,3),(4,5,6,7,8,9,10,11,12)$ \\
& $(1,4,6),(2,5,3,7,9,11,8,10,12)$ & $(1,4,6),(2,5,3,7,9,11,8,12,10)$\\
& $(1,5,7),(2,4,8,12,9,3,10,6,11)$ & $(1,5,7),(2,4,8,10,3,9,12,6,11)$\\
$C_3 \cup C_{10}$ & $(1,2,3),(4,5,6,7,8,9,10,11,12,13)$ & $(1,2,3),(4,5,6,7,8,9,10,11,12,13)$\\
& $(1,4,6),(2,5,3,7,9,11,8,12,10,13)$ & $(1,4,6),(2,5,3,7,9,11,8,13,10,12)$\\
& $(1,5,7),(2,4,3,8,10,6,11,13,9,12)$ & $(1,5,7),(2,4,3,6,10,8,12,9,13,11)$\\
$C_4 \cup C_5$ & $(1,2,3,4),(5,6,7,8,9)$ & $(1,2,3,4),(5,6,7,8,9)$\\
& $(1,3,5,7),(2,4,8,6,9)$ & $(1,3,5,7),(2,6,8,4,9)$\\
& $(1,5,2,8),(3,6,4,7,9)$ & $(1,5,2,8),(3,6,4,7,9)$\\
$C_4 \cup C_6$ & $(1,2,3,4),(5,6,7,8,9,10)$ & $(1,2,3,4),(5,6,7,8,9,10)$ \\
& $(1,3,5,7),(2,4,8,10,6,9)$ & $(1,3,5,7),(2,6,8,10,4,9)$\\
& $(1,5,2,8),(3,6,4,9,7,10)$ & $(1,5,2,8),(3,9,6,4,7,10)$\\
$C_4 \cup C_7$ & $(1,2,3,4),(5,6,7,8,9,10,11)$ & $(1,2,3,4),(5,6,7,8,9,10,11)$\\
& $(1,3,5,7),(2,4,6,9,11,8,10)$ & $(1,3,5,7),(2,4,8,10,6,9,11)$\\
& $(1,5,2,8),(3,6,10,4,9,7,11)$ & $(1,5,2,6),(3,8,11,4,9,7,10)$\\
$C_4 \cup C_8$ & $(1,2,3,4),(5,6,7,8,9,10,11,12)$ & $(1,2,3,4),(5,6,7,8,9,10,11,12)$\\
& $(1,3,5,7),(2,4,6,8,10,12,9,11)$ & $(1,3,5,7),(2,4,6,9,11,8,10,12)$\\
& $(1,5,2,6),(3,9,4,10,7,11,8,12)$ & $(1,5,2,6),(3,8,12,9,4,10,7,11)$\\
$C_4 \cup C_9$ & $(1,2,3,4),(5,6,7,8,9,10,11,12,13)$ & $(1,2,3,4),(5,6,7,8,9,10,11,12,13)$\\
& $(1,3,5,7),(2,4,6,8,10,12,9,11,13)$ & $(1,3,5,7),(2,4,6,8,10,12,9,13,11)$\\
& $(1,5,2,6),(3,7,9,13,10,4,11,8,12)$ & $(1,5,2,6),(3,7,9,11,4,10,13,8,12)$\\
$C_4 \cup C_{10}$ & $(1,2,3,4),(5,6,7,8,9,10,11,12,13,14)$ & $(1,2,3,4),(5,6,7,8,9,10,11,12,13,14)$\\
& $(1,3,5,7),(2,4,6,8,10,12,9,13,11,14)$ & $(1,3,5,7),(2,4,6,8,10,12,9,14,11,13)$\\
& $(1,5,2,6),(3,7,4,9,11,8,12,14,10,13)$ & $(1,5,2,6),(3,7,4,8,11,9,13,10,14,12)$\\
$C_5 \cup C_5$ & $(1,2,3,4,5),(6,7,8,9,10)$ & $(1,2,3,4,5),(6,7,8,9,10)$\\
& $(1,3,5,6,8),(2,4,9,7,10)$ & $(1,3,5,2,6),(4,8,10,7,9)$\\
& $(1,4,6,2,7),(3,8,10,5,9)$ & $(1,4,7,2,10),(3,6,8,5,9)$\\
$C_5 \cup C_6$ & $(1,2,3,4,5),(6,7,8,9,10,11)$ & $(1,2,3,4,5),(6,7,8,9,10,11)$\\
& $(1,3,5,2,4),(6,8,10,7,11,9)$ & $(1,3,5,2,6),(4,7,9,11,8,10)$\\
& $(1,6,2,7,9),(3,8,4,10,5,11)$ & $(1,4,2,7,11),(3,8,5,9,6,10)$\\
$C_5 \cup C_7$ & $(1,2,3,4,5),(6,7,8,9,10,11,12)$ & $(1,2,3,4,5),(6,7,8,9,10,11,12)$\\
& $(1,3,5,2,4),(6,8,10,7,12,9,11)$ & $(1,3,5,2,4),(6,8,11,7,9,12,10)$\\
& $(1,6,2,7,9),(3,8,4,10,12,5,11)$ & $(1,6,2,7,10),(3,8,4,9,11,5,12)$\\
$C_5 \cup C_8$ & $(1,2,3,4,5),(6,7,8,9,10,11,12,13)$ & $(1,2,3,4,5),(6,7,8,9,10,11,12,13)$\\
& $(1,3,5,2,4),(6,8,10,7,11,13,9,12)$ & $(1,3,5,2,4),(6,8,10,7,12,9,13,11)$\\
& $(1,6,2,7,9),(3,8,4,11,5,12,10,13)$ & $(1,6,2,7,9),(3,8,4,11,5,12,10,13)$\\
$C_5 \cup C_9$ & $(1,2,3,4,5),(6,7,8,9,10,11,12,13,14)$ & $(1,2,3,4,5),(6,7,8,9,10,11,12,13,14)$\\
& $(1,3,5,2,4),(6,8,10,7,9,12,14,11,13)$ & $(1,3,5,2,4),(6,8,10,7,11,13,9,14,12)$\\
& $(1,6,2,7,11),(3,8,4,9,13,5,12,10,14)$ & $(1,6,2,7,9),(3,8,4,11,14,5,12,10,13)$\\
$C_5 \cup C_{10}$ & $(1,2,3,4,5),(6,7,8,9,10,11,12,13,14,15)$ & $(1,2,3,4,5),(6,7,8,9,10,11,12,13,14,15)$\\
& $(1,3,5,2,4),(6,8,10,7,9,11,13,15,12,14)$ & $(1,3,5,2,4),(6,8,10,7,9,12,14,11,15,13)$\\
& $(1,6,2,7,11),(3,8,4,12,5,13,9,14,10,15)$ & $(1,6,2,7,11),(3,8,4,9,13,5,12,15,10,14)$\\
$C_6 \cup C_6$ & $(1,2,3,4,5,6),(7,8,9,10,11,12)$ & $(1,2,3,4,5,6),(7,8,9,10,11,12)$\\
& $(1,3,5,2,4,7),(6,8,10,12,9,11)$ & $(1,3,5,2,4,7),(6,9,11,8,12,10)$\\
& $(1,4,6,2,7,9),(3,10,5,11,8,12)$ & $(1,4,6,2,7,11),(3,8,10,5,9,12)$\\
$C_6 \cup C_7$ & $(1,2,3,4,5,6),(7,8,9,10,11,12,13)$ & $(1,2,3,4,5,6),(7,8,9,10,11,12,13)$ \\
& $(1,3,5,2,4,7),(6,8,10,12,9,11,13)$ & $(1,3,5,2,4,7),(6,8,10,12,9,13,11)$\\
& $(1,4,6,2,7,9),(3,10,13,5,11,8,12)$ & $(1,4,6,2,7,9),(3,10,13,5,11,8,12)$\\
$C_6 \cup C_8$ & $(1,2,3,4,5,6),(7,8,9,10,11,12,13,14)$ &$(1,2,3,4,5,6),(7,8,9,10,11,12,13,14)$ \\
& $(1,3,5,2,4,7),(6,8,10,12,9,13,11,14)$ & $(1,3,5,2,4,7),(6,8,10,12,9,14,11,13)$\\
& $(1,4,6,2,7,5),(3,9,11,8,12,14,10,13)$ & $(1,4,6,2,7,5),(3,8,11,9,13,10,14,12)$\\
$C_6 \cup C_9$ & $(1,2,3,4,5,6),(7,8,9,10,11,12,13,14,15)$ & $(1,2,3,4,5,6),(7,8,9,10,11,12,13,14,15)$ \\
& $(1,3,5,2,4,7),(6,8,10,12,9,13,15,11,14)$ & $(1,3,5,2,4,7),(6,8,10,12,9,14,11,13,15)$\\
& $(1,4,6,2,7,5),(3,8,11,9,14,12,15,10,13)$ & $(1,4,6,2,7,5),(3,8,11,9,13,10,14,12,15)$\\
$C_6 \cup C_{10}$ & $(1,2,3,4,5,6),(7,8,9,10,11,12,13,14,15,16)$ & $(1,2,3,4,5,6),(7,8,9,10,11,12,13,14,15,16)$\\
& $(1,3,5,2,4,7),(6,8,10,12,9,11,14,16,13,15)$ & $(1,3,5,2,4,7),(6,8,10,12,9,13,15,11,14,16)$\\
& $(1,4,6,2,7,5),(3,8,11,13,9,14,10,15,12,16)$ & $(1,4,6,2,7,5),(3,8,11,9,14,12,15,10,13,16)$\\
\hline
$2C_3\cup C_4$ & $(1,2,3),(4,5,6),(7,8,9,10)$ & $(1,2,3),(4,5,6),(7,8,9,10)$ \\
& $(1,4,7),(2,5,8),(3,9,6,10)$ & $(1,4,7),(2,5,9),(3,8,6,10)$\\
& $(1,5,9),(2,4,10),(3,7,6,8)$ & $(1,5,8),(2,4,10),(3,7,6,9)$\\
$2C_3\cup C_5$ & $(1,2,3),(4,5,6),(7,8,9,10,11)$ & $(1,2,3),(4,5,6),(7,8,9,10,11)$\\
& $(1,4,7),(2,5,8),(3,9,11,6,10)$ & $(1,4,7),(2,5,9),(3,8,10,6,11)$\\
& $(1,5,9),(2,4,10),(3,7,6,8,11)$ & $(1,5,8),(2,4,11),(3,9,6,7,10)$\\
\hline\hline
\end{tabular}
\end{center}
\end{tiny}
\end{table}

\begin{table}[hp]
\begin{tiny}
\makebox[\textwidth][c]{
\begin{tabular}{||c|l|l||}
\hline
\hline
2-factor & First packing & Second packing \\
\hline\hline
$2C_3\cup C_6$ & $(1,2,3),(4,5,6),(7,8,9,10,11,12)$ & $(1,2,3),(4,5,6),(7,8,9,10,11,12)$\\
& $(1,4,7),(2,5,8),(3,6,10,12,9,11)$ & $(1,4,7),(2,5,8),(3,9,11,6,10,12)$\\
& $(1,5,9),(2,4,10),(3,7,6,11,8,12)$ & $(1,5,9),(2,4,10),(3,6,12,8,11,7)$\\
$2C_3\cup C_7$ & $(1,2,3),(4,5,6),(7,8,9,10,11,12,13)$ & $(1,2,3),(4,5,6),(7,8,9,10,11,12,13)$ \\
& $(1,4,7),(2,5,8),(3,6,9,11,13,10,12)$ & $(1,4,7),(2,5,8),(3,6,10,12,9,13,11)$\\
& $(1,5,9),(2,4,10),(3,7,11,6,12,8,13)$ & $(1,5,9),(2,4,10),(3,7,11,6,12,8,13)$\\
$2C_3\cup C_8$ & $(1,2,3),(4,5,6),(7,8,9,10,11,12,13,14)$ & $(1,2,3),(4,5,6),(7,8,9,10,11,12,13,14)$\\
& $(1,4,7),(2,5,8),(3,6,9,11,13,10,12,14)$ & $(1,4,7),(2,5,8),(3,6,9,11,13,10,14,12)$\\
& $(1,5,9),(2,4,10),(3,7,11,14,6,12,8,13)$ & $(1,5,9),(2,4,10),(3,7,11,14,6,12,8,13)$\\
$2C_3\cup C_9$ & $(1,2,3),(4,5,6),(7,8,9,10,11,12,13,14,15)$ & $(1,2,3),(4,5,6),(7,8,9,10,11,12,13,14,15)$ \\
& $(1,4,7),(2,5,8),(3,6,9,11,13,10,14,12,15)$ & $(1,4,7),(2,5,8),(3,6,9,11,13,10,15,12,14)$\\
& $(1,5,9),(2,4,10),(3,7,6,12,8,13,15,11,14)$ & $(1,5,9),(2,4,10),(3,7,6,12,8,14,11,15,13)$\\
$2C_3\cup C_{10}$ & $(1,2,3),(4,5,6),(7,8,9,10,11,12,13,14,15,16)$ & $(1,2,3),(4,5,6),(7,8,9,10,11,12,13,14,15,16)$\\
& $(1,4,7),(2,5,8),(3,6,9,11,13,10,14,16,12,15)$ & $(1,4,7),(2,5,8),(3,6,9,11,13,10,15,12,14,16)$\\
& $(1,5,9),(2,4,10),(3,7,6,8,12,14,11,15,13,16)$ & $(1,5,9),(2,4,10),(3,7,6,11,14,8,12,16,13,15)$\\
$C_3\cup 2C_4$ & $(1,2,3),(4,5,6,7),(8,9,10,11)$ & $(1,2,3),(4,5,6,7),(8,9,10,11)$\\
& $(1,4,6),(2,5,8,10),(3,7,9,11)$ & $(1,4,6),(2,5,8,10),(3,9,7,11)$\\
& $(1,5,7),(2,8,3,9),(4,10,6,11)$ & $(1,5,7),(2,4,9,11),(3,8,6,10)$\\
$C_3\cup C_4\cup C_5$ & $(1,2,3),(4,5,6,7),(8,9,10,11,12)$ & $(1,2,3),(4,5,6,7),(8,9,10,11,12)$\\
& $(1,4,6),(2,5,3,7),(8,10,12,9,11)$ & $(1,4,6),(2,5,3,8),(7,10,12,9,11)$\\
& $(1,5,8),(2,9,4,10),(3,6,11,7,12)$ & $(1,5,7),(2,9,3,10),(4,8,11,6,12)$\\
$C_3\cup C_4\cup C_6$ & $(1,2,3),(4,5,6,7),(8,9,10,11,12,13)$ & $(1,2,3),(4,5,6,7),(8,9,10,11,12,13)$\\
& $(1,5,7),(2,4,8,12),(3,9,11,6,10,13)$ & $(1,5,7),(2,8,3,11),(4,9,13,6,10,12)$\\
& $(1,4,6),(2,5,3,7),(8,10,12,9,13,11)$ & $(1,4,6),(2,5,3,7),(8,10,13,11,9,12)$\\
$3C_4$ & $(1,2,3,4),(5,6,7,8),(9,10,11,12)$ & $(1,2,3,4),(5,6,7,8),(9,10,11,12)$\\
& $(1,3,5,7),(2,4,9,11),(6,8,10,12)$ & $(1,3,5,7),(2,4,9,11),(6,10,8,12)$\\
& $(1,5,2,6),(3,9,7,10),(4,11,8,12)$ & $(1,5,2,6),(3,9,8,11),(4,7,10,12)$\\
$2C_4\cup C_5$ & $(1,2,3,4),(5,6,7,8),(9,10,11,12,13)$ & $(1,2,3,4),(5,6,7,8),(9,10,11,12,13)$\\
& $(1,3,5,7),(2,4,6,8),(9,11,13,10,12)$ & $(1,3,5,7),(2,4,6,8),(9,11,13,10,12)$\\
& $(1,5,2,9),(3,10,6,11),(4,7,12,8,13)$ & $(1,5,2,9),(3,10,7,11),(4,8,12,6,13)$\\
$2C_4\cup C_6$ & $(1,2,3,4),(5,6,7,8),(9,10,11,12,13,14)$ & $(1,2,3,4),(5,6,7,8),(9,10,11,12,13,14)$ \\
& $(1,6,9,12),(4,11,8,14),(2,5,3,13,10,7)$ & $(2,12,3,13),(4,6,10,7),(1,5,14,11,9,8)$\\
& $(1,3,6,8),(2,4,5,9),(7,12,10,14,11,13)$ & $(1,3,5,7),(2,4,8,10),(6,11,13,9,12,14)$\\
$C_4\cup 2C_5$ & $(1,2,3,4),(5,6,7,8,9),(10,11,12,13,14)$ & $(1,2,3,4),(5,6,7,8,9),(10,11,12,13,14)$\\
& $(2,9,4,14),(1,3,12,8,5),(6,10,7,13,11)$ & $(1,3,9,11),(2,5,14,7,12),(4,6,10,8,13)$\\
& $(1,6,2,7),(3,5,4,8,11),(9,13,10,12,14)$ & $(1,5,3,6),(2,4,7,9,14),(8,11,13,10,12)$\\
$C_4\cup C_5\cup C_6$ & $(1,2,3,4),(5,6,7,8,9),(10,11,12,13,14,15)$ & $(1,2,3,4),(5,6,7,8,9),(10,11,12,13,14,15)$\\
& $(3,14,8,15),(1,10,2,7,13),(4,5,11,6,9,12)$ & $(2,6,12,7),(1,10,14,5,13),(3,9,15,8,4,11)$\\
& $(1,3,5,7),(2,4,6,8,11),(9,10,13,15,12,14)$ & $(1,3,5,7),(2,4,6,8,10),(9,11,13,15,12,14)$\\
$3C_5$ & $(1,2,3,4,5),(6,7,8,9,10),(11,12,13,14,15)$ & $(1,2,3,4,5),(6,7,8,9,10),(11,12,13,14,15)$\\
& $(1,7,11,2,14),(3,5,9,6,15),(4,10,12,8,13)$ & $(1,7,5,2,15),(3,6,13,4,8),(9,12,10,11,14)$\\
& $(1,3,6,2,4),(5,7,9,11,8),(10,13,15,12,14)$ & $(1,3,5,6,4),(2,7,9,11,8),(10,13,15,12,14)$\\
$2C_5\cup C_6$ & $(1,2,3,4,5),(6,7,8,9,10),(11,12,13,14,15,16)$ & $(1,2,3,4,5),(6,7,8,9,10),(11,12,13,14,15,16)$\\
& $(1,4,16,10,12),(2,7,9,5,8),(3,14,6,13,11,15)$ & $(1,10,15,6,12),(2,7,4,14,16),(3,5,13,9,11,8)$\\
& $(1,3,5,2,6),(4,7,10,8,11),(9,12,14,16,13,15)$ & $(1,3,6,2,4),(5,7,9,12,8),(10,14,11,15,13,16)$\\
$C_5\cup 2C_6$ & $(1,2,3,4,5),(6,7,8,9,10,11),(12,13,14,15,16,17)$ & $(1,2,3,4,5),(6,7,8,9,10,11),(12,13,14,15,16,17)$\\
& $(4,10,14,16,13),(1,8,6,15,9,17),(2,7,12,3,5,11)$ & $(1,10,2,6,15),(3,7,16,14,11,9),(4,8,13,17,5,12)$\\
& $(1,3,6,2,4),(5,7,9,11,8,13),(10,15,17,14,12,16)$ & $(1,4,2,5,3),(6,8,10,7,9,13),(11,15,17,14,12,16)$\\
$3C_6$ & $(1,2,3,4,5,6),(7,8,9,10,11,12),(13,14,15,16,17,18)$ &$(1,2,3,4,5,6),(7,8,9,10,11,12),(13,14,15,16,17,18)$ \\
& $(1,5,14,7,3,13),(2,12,18,10,6,16),(4,8,11,17,9,15)$ & $(1,4,2,17,10,12),(3,5,8,18,16,13),(6,9,11,15,7,14)$\\
& $(1,3,5,2,4,7),(6,8,10,12,9,11),(13,15,17,14,18,16)$ & $(1,3,6,2,5,7),(4,8,10,13,9,12),(11,16,14,17,15,18)$\\

\hline
$4C_3$ & $(1,2,3),(4,5,6),(7,8,9),(10,11,12)$ & $(1,2,3),(4,5,6),(7,8,9),(10,11,12)$\\
& $(1,5,8),(2,4,12),(3,9,10),(6,7,11)$ & $(1,5,8),(2,7,12),(3,6,10),(4,9,11)$\\
& $(1,4,7),(2,5,10),(3,8,11),(6,9,12)$ & $(1,4,7),(2,5,10),(3,8,11),(6,9,12)$\\
$3C_3 \cup C_4$ & $(1,2,3),(4,5,6),(7,8,9),(10,11,12,13)$ & $(1,2,3),(4,5,6),(7,8,9),(10,11,12,13)$\\
& $(1,5,9),(2,4,10),(3,6,11),(7,12,8,13)$ & $(1,5,9),(2,4,10),(3,11,13),(6,7,12,8)$\\
& $(1,4,7),(2,5,8),(3,10,12),(6,9,11,13)$ & $(1,4,7),(2,5,8),(3,10,12),(6,11,9,13)$\\
$3C_3 \cup C_5$ & $(1,2,3),(4,5,6),(7,8,9),(10,11,12,13,14)$ & $(1,2,3),(4,5,6),(7,8,9),(10,11,12,13,14)$ \\
& $(2,9,10),(3,6,12),(4,7,14),(1,5,13,8,11)$ & $(1,4,12),(2,7,10),(5,9,11),(3,8,14,6,13)$\\
& $(1,4,8),(2,5,7),(3,9,11),(6,13,10,12,14)$ & $(1,5,7),(2,4,8),(3,6,9),(10,12,14,11,13)$\\
$3C_3 \cup C_6$ & $(1,2,3),(4,5,6),(7,8,9),(10,11,12,13,14,15)$ & $(1,2,3),(4,5,6),(7,8,9),(10,11,12,13,14,15)$\\
& $(1,9,11),(3,5,7),(8,12,15),(2,4,14,6,10,13)$ & $(2,4,8),(6,13,15),(10,12,14),(1,9,3,7,5,11)$\\
& $(1,4,7),(2,5,8),(3,6,12),(9,10,14,11,13,15)$ & $(1,4,7),(2,5,9),(3,6,14),(8,10,13,11,15,12)$\\
\hline
$5C_3$ & $(1,2,3),(4,5,6),(7,8,9),(10,11,12),(13,14,15)$ & $(1,2,3),(4,5,6),(7,8,9),(10,11,12),(13,14,15)$\\
& $(1,4,7),(2,10,13),(3,6,14),(5,8,11),(9,12,15)$ & $(1,4,7),(2,5,8),(3,12,15),(6,10,13),(9,11,14)$\\
& $(1,5,9),(2,4,8),(3,10,15),(6,11,13),(7,12,14)$ & $(1,5,9),(2,4,10),(3,6,14),(7,11,15),(8,12,13)$\\
\hline\hline
\end{tabular}
}
\end{tiny}
\caption{Two distinct packings of three copies of each of 46 small 2-factors}
\label{46_small_cases}
\end{table}

\end{document}